\documentclass[a4paper,10pt]{amsart}
\usepackage[utf8]{inputenc}

\usepackage[
  margin=30mm,
  marginparwidth=25mm,     
  marginparsep=2mm,       
  bottom=25mm,
  ]{geometry}

\usepackage[bbgreekl]{mathbbol}
\usepackage{amsfonts}
\usepackage{latexsym,amssymb,amsthm,mathrsfs,amsmath,amscd,enumerate,enumitem, amscd,color}
\usepackage{mathrsfs}
\usepackage{tikz}
\usepackage{hyperref}
\usetikzlibrary{plotmarks}
\usepackage{cite}
\usepackage{soul}

\DeclareSymbolFontAlphabet{\mathbb}{AMSb}
\DeclareSymbolFontAlphabet{\mathbbl}{bbold}
\DeclareSymbolFontAlphabet{\mathbb}{AMSb}
\DeclareSymbolFontAlphabet{\mathbbl}{bbold}
\DeclareFontEncoding{FMS}{}{}
\DeclareFontSubstitution{FMS}{futm}{m}{n}
\DeclareFontEncoding{FMX}{}{}
\DeclareFontSubstitution{FMX}{futm}{m}{n}
\DeclareSymbolFont{fouriersymbols}{FMS}{futm}{m}{n}
\DeclareSymbolFont{fourierlargesymbols}{FMX}{futm}{m}{n}
\DeclareMathDelimiter{\VERT}{\mathord}{fouriersymbols}{152}{fourierlargesymbols}{147}


\newtheorem{thm}{Theorem}[section]

 \newtheorem{prop}[thm]{Proposition}
\theoremstyle{definition}

 \theoremstyle{remark}
 \newtheorem{rem}[thm]{Remark}

\usepackage{hyperref}


\newcommand{\supp}{\mathop{\mathrm{supp}}}




\numberwithin{equation}{section}
\allowdisplaybreaks

\begin{document}


\title[]
 {$L^p$-boundedness properties for some harmonic analysis operators defined by resolvents for a Laplacian with drift in Euclidean spaces}

\author[J.J. Betancor]{Jorge J. Betancor}
\author[J.C. Fari\~na]{Juan C. Fari\~na}
\author[L. Rodr\'{\i}guez-Mesa]{Lourdes Rodr\'{\i}guez-Mesa}
\address{Jorge J. Betancor, Juan C. Fari\~na, Lourdes Rodr\'{\i}guez-Mesa\newline
	Departamento de An\'alisis Matem\'atico, Universidad de La Laguna,\newline
	Campus de Anchieta, Avda. Astrof\'isico S\'anchez, s/n,\newline
	38721 La Laguna (Sta. Cruz de Tenerife), Spain}
\email{jbetanco@ull.es, jcfarina@ull.edu.es, lrguez@ull.edu.es
}

\thanks{The authors are partially supported by grant PID2019-106093GB-I00 from the Spanish Government}

\subjclass[2020]{42B20, 42B25, 42B30, 58J35}

\keywords{}

\date{\today}


\begin{abstract}
We consider the Laplacian with drift in $\mathbb R^n$ defined by $\Delta_\nu = \sum_{i=1}^n(\frac{\partial^2}{\partial x_i^2} + 2 \nu_i\frac{\partial }{\partial{x_i}})$ where $\nu=(\nu_1,\ldots,\nu_n)\in \mathbb R^n\setminus\{0\}$. The operator $\Delta_\nu$ is selfadjoint with respect to the measure $d\mu_\nu(x)=e^{2\langle\nu,x\rangle}dx$. This measure is not doubling but it is locally doubling in $\mathbb R^n$. We define, for every $M>0$ and $k \in \mathbb N$, the operators
$$
W^k_{\nu,M,*}(f) = \sup_{t>0}\left|A^k_{\nu,M,t}(f)\right|,\hspace{5mm}g_{\nu,M}^k(f) = \left(\int_0^\infty\left|A^k_{\nu,M,t}(f)\right|^2\frac{dt}{t}\right)^{\frac{1}{2}},\,k\geq 1,
$$
the $\rho$-variation operator
$$
V_\rho\left( \{A^k_{\nu,M,t}\}_{t>0}\right)(f)= \sup_{0<t_1<\cdots<t_\ell,\,\ell \in \mathbb N}\left(\sum^{\ell-1}_{j=1}\left|A^k_{\nu,M,t_j}(f)- A^k_{\nu,M,t_{j+1}}(f)\right|^\rho\right)^{\frac{1}{\rho}},\;\; \rho>2,
$$
and, if $\{t_j\}_{j\in \mathbb N}$ is a decreasing sequence in $(0,\infty)$, the oscillation operator
$$
O(\{A_{\nu,M,t}^k\}_{t>0},\{t_j\}_{j\in \mathbb N})(f)=\Big(\sum_{j\in \mathbb N}\;\;\sup_{t_{j+1}\leq \varepsilon <\varepsilon '\leq t_j}|A^k_{\nu,M,\varepsilon}(f)-A^k_{\nu,M,\varepsilon '}(f)|^2 \Big)^{1/2}.
$$
where $A^k_{\nu,M,t}=t^k\partial^k_t(I-t\Delta_\nu)^{-M}$, $t>0$. We denote by $T_{\nu,M}^k$ any of the above operators. We analyze the boundedness of $T^k_{\nu,M}$ on $L^p(\mathbb R^n,\mu_\nu)$ into itself, for every $1<p<\infty$, and from $L^1(\mathbb R^n,\mu_\nu)$ into $L^{1,\infty}(\mathbb R^n,\mu_\nu)$. 
In addition, we obtain boundedness properties for the operator $G_{\nu,M}^{k,\ell}$, $1\leq \ell <2M$, defined by
$$
G_{\nu,M}^{k,\ell}(f)=\left(\int_0^\infty\left|t^{\ell /2+k}\partial _t^kD^{(\ell)}(I-t\Delta _\nu)^{-M}(f) \right|^2\frac{dt}{t}\right)^{\frac{1}{2}},
$$
for certain differentiation operator $D^{(\ell)}$.

\end{abstract}

\maketitle

\baselineskip=15pt
\section{Introduction}

Let $\nu=(\nu_1,\ldots,\nu_n) \in \mathbb R^n\setminus\{0\}$ and denote by $\mu_\nu$ the measure given by $d\mu_\nu(x) = e^{2\langle \nu,x\rangle}dx$, where as usual $\langle \cdot,\cdot\rangle$ represents the inner product in $\mathbb R^n$. If $\nu_1=1$ and $\nu_j=0$, $j=2,\ldots,n$, then, for any ball $B(x,r)= \{y \in \mathbb R^n: |x-y|<r\}$, $x \in \mathbb R^n$ and $r>0$, 
$$
\displaystyle\mu_\nu(B(x,r))\sim\left\{\begin{array}{ll}
                            r^ne^{2x_1},& \;0<r\leq 1, \\[0.1cm]
                            r^{\frac{n-1}{2}}e^{2(x_1+r)},& \; r>1,
                            \end{array}\right.
$$
(see \cite[(2.10)]{LSjW}). Thus, the measure $\mu_\nu$ is a measure with exponential growth with respect to the Euclidean distance that is not doubling, but which is locally doubling. 

Consider the Laplacian with drift in $\mathbb R^n$ defined by 
$$
\Delta_\nu=\sum^n_{i=1}\Big(\frac{\partial^2}{\partial x_i^2}+2\nu_i\frac{\partial }{\partial{x_i} }\Big).
$$ 
This operator admits a selfadjoint extension in $L^2(\mathbb R^n,\mu_\nu)$ being $-\Delta_\nu$ a positive operator. Moreover, $\Delta_\nu$ generates a semigroup of operators $\{W_t^\nu\}_{t>0}$ in $L^2(\mathbb R^n,\mu_\nu)$. Actually,  $\{W_t^\nu\}_{t>0}$ is a symmetric diffusion semigroup in $L^p(\mathbb R^n,\mu_\nu)$, $1 \leq p < \infty$, in the sense of Stein (\!\! \cite{StLP}) (see \cite[Theorem 11.8]{Gr}). 

Harmonic analysis operators associated with $\Delta_\nu$ have been recently studied. We mention the paper of Lohou\'e and Mustapha \cite{LM} where it is established that the Riesz transforms of any order are bounded on $L^p(\mathbb R^n,\mu_\nu)$, $1<p<\infty$. Their results actually hold in a more general setting. In \cite[Theorem 1]{LSjW} Li, Sj\"ogren, and Wu established that the first order Riesz transform is bounded from $L^1(\mathbb R^n, \mu_\nu)$ into $L^{1,\infty}(\mathbb R^n,\mu_\nu)$. The extension of this result to higher order Riesz transform was obtained in \cite[Theorem 1.1]{LSj4}. According to \cite[Theorem p. 73]{StLP} the maximal operator associated with the semigroup $\{W_t^\nu\}_{t>0}$ is bounded on $L^p(\mathbb R^n,\mu_\nu)$, for every $1<p<\infty$.
In \cite[Theorem 2]{LSjW} it was proved that this maximal operator is bounded from $L^1(\mathbb R^n,\mu_\nu)$ into $L^{1,\infty}(\mathbb R^n,\mu_\nu)$. The behavior on $L^1(\mathbb R^n,\mu_\nu)$ of Littlewood-Paley functions defined by the semigroup $\{W_t^\nu\}_{t>0}$ and the subordinated Poisson semigroup associated with $\{W_t^\nu\}_{t>0}$ were established in \cite[Theorems 1.2 and 1.3]{LSj4}. Since the measure $\mu_\nu$ is of exponential growth, the proofs of the results are more subtle that the corresponding ones in the doubling setting.

Our objective in this paper is to establish $L^p$-boundedness properties for certain maximal operators, Littlewood-Paley functions and variation and oscillation operators involving the resolvent of the operator $\Delta_\nu$. Specifically, we deal with the following operators.

Let $k \in \mathbb N$ and $M>0$. Define, for every $t>0$,
$A^k_{\nu,M,t}:= t^k\partial^k_t(I-t\Delta_\nu)^{-M}$. Consider the maximal operator $W_{\nu,M,*}^k$ given by
$$
W_{\nu,M,*}^k(f) =\sup_{t>0}\left|A^k_{\nu,M,t}(f)\right|,
$$
and define, when $k\geq 1$, the Littlewood-Paley function $g_{\nu,M}^k$ by
$$
g^k_{\nu,M}(f)=\left(\int_0^\infty \left|A^k_{\nu,M,t}(f)\right|^2 \frac{dt}{t}\right)^{\frac{1}{2}}.
$$
Littlewood- Paley functions involving resolvents have been used in the study of functional calculus for perturbated Hodge-Dirac operators (\!\! \cite{AKM}, \cite{BS}, \cite{FMP}, \cite{HM} and \cite{HMP}).

As it is well-known $L^p$- boundedness properties for maximal operators allow us to obtain pointwise convergence of the family defining the maximal operator. Another tool to measure the speed of that convergence is the use of $\rho$-variation and oscillation operators.

Let $\rho >2$. The linear space $E_\rho$ consists of all those $g:(0,\infty) \longrightarrow \mathbb C$ such that 
$$
\|g\|_{E_\rho} = \sup_{0<t_1<\cdots <t_\ell,\, \ell\in \mathbb N}\Big(\sum_{j=1}^{\ell -1} |g(t_j)-g(t_{j+1})|^\rho\Big)^{1/\rho}< \infty.
$$
By identifying the functions that differ in a constant $(E_\rho,\|\cdot\|_{E_\rho})$ is a Banach space.

It is usual to consider the oscillation operator as a substitute of the $\rho$-variation operator when $\rho =2$. Let $\{t_j\}_{j\in \mathbb N}$ be a decreasing sequence in $(0,\infty )$. We say that a  function $g:(0,\infty)\longrightarrow \mathbb C$ is in $O(\{t_j\}_{j\in \mathbb N})$ when $\|g\|_{O(\{t_j\}_{j\in \mathbb N})}<\infty$, where
$$
\|g\|_{O(\{t_j\}_{j\in \mathbb N})}=\Big(\sum_{j\in \mathbb N}\sup_{t_{j+1}\leq \varepsilon _{j+1}<\varepsilon _j\leq t_j}|g(\varepsilon _j)-g(\varepsilon _{j+1})|^2\Big)^{1/2}.
$$ 
By identifying those functions that differ by a constant, $(O(\{t_j\}_{j\in \mathbb N}), \|\cdot \|_{O(\{t_j\}_{j\in \mathbb N})})$ is a Banach space.

Suppose that $\{T_t\}_{t>0}$ is a family of operators defined, for instance, in $L^p(\Omega,\lambda)$, for some $1\leq p <\infty$ and some measure space $(\Omega,\lambda)$. We define the $\rho$-variation operator $V_\rho\left(\{T_t\}_{t>0}\right)$ as follows
$$
V_\rho\left(\{T_t\}_{t>0}\right)(f)(x) =\|g_x\|_{E_\rho},\;\;x \in \Omega,
$$
where, for $x\in \Omega$, $g_x(t) =T_t(f)(x)$, $t>0$.

L\'epingle (\!\! \cite{Le}) established a variational inequality for martingales. Bourgain \cite{Bou} proved that the $\rho$-variation operator defined by ergodic means are bounded from $L^2(\Omega,\lambda)$ into itself. This property was extended to $L^p(\Omega,\lambda)$, with $1<p<\infty$, by Jones, Kaufman, Rosenblatt and Wierdl (\!\! \cite{JKRW}). After \cite{Bou}, variational type inequalities have been extensively studied in probability theory, ergodic theory and harmonic analysis (related to singular integrals and semigroups of operators) (see for instance, \cite{CJRW1}, \cite{CJRW2}, \cite{CMMTV}, \cite{HMMT}, \cite{JKRW, JR, JSW, JW}, \cite{LeMX2}, \cite{MTX1, MTX2, OSTTW, PX} and \cite{TZ}). In order to obtain variational inequalities, in general, it is necessary to assume that $\rho >2$ (see\cite{Qi}).

Consider the $\rho$-variation operator for $\{A_{\nu,M,t}^k\}_{t>0}$ given by 
$$
V_\rho (\{A_{\nu,M,t}^k\}_{t>0})(f)=\sup_{0<t_1<\cdots <t_\ell ,\,\ell \in \mathbb N}\left(\sum_{j=1}^{\ell -1}|A_{\nu, M,t_j}^k(f)-A_{\nu,M,t_{j+1}}^k(f)|^\rho \right)^{1/\rho }.
$$

Some comment about the measurability of the function $V_\rho (\{A_{\nu,M,t}^k\}_{t>0})(f)$, when $f\in L^p(\mathbb R^n,\mu_\nu )$, $1\leq p<\infty$, is in order. Since the function $t\in (0,\infty )\longrightarrow A_{\nu ,M,t}^k(f)(x)$, $x\in \mathbb R^n$, is continuous in $(0,\infty )$ (see section \ref{Svariation}), we can write
$$
V_\rho (\{A_{\nu,M,t}^k\}_{t>0})(f)(x)=\sup_{\substack{0<t_1<\cdots <t_\ell\\t_j\in \mathbb Q,\,j=1,\ldots, \ell,\,\ell \in \mathbb N}}\left(\sum_{j=1}^{\ell -1}|A_{\nu, M,t_j}^k(f)(x)-A_{\nu,M,t_{j+1}}^k(f)(x)|^\rho \right)^{1/\rho },\quad x\in \mathbb R^n.
$$
Then, since the set of the finite subsets of $\mathbb Q$ is countable, the measurability of $V_\rho (\{A_{\nu,M,t}^k\}_{t>0})(f)$ is justified.

In a similar way we can define the oscillation operator associated to a family of operators. Suppose that $\{t_j\}_{j\in \mathbb N}$ is a decreasing sequence in $(0,\infty )$ and $\{T_t\}_{t>0}$ a family of operators as before. We define
$$
O(\{T_t\}_{t>0},\{t_j\}_{j\in \mathbb N})(f)=\Big(\sum_{j\in \mathbb N}\;\;\sup_{t_{j+1}\leq \varepsilon <\varepsilon '\leq t_j}|T_\varepsilon(f)-T_{\varepsilon '}(f)|^2 \Big)^{1/2}.
$$
In particular, we consider the oscillation operator $O(\{A_{\nu,M,t}^k\}_{t>0},\{t_j\}_{j\in \mathbb N})$ associated with the family $\{A_{\nu ,M,t}^k\}_{t>0}$.

 Following \cite{LSj4} let $D^{(\ell)}$, $\ell \in \mathbb N$, a differentiation operator of the form $D^{(\ell)}=\sum_{|\alpha|=\ell}a_\alpha \partial ^\alpha$ where $\alpha =(\alpha_1,\ldots,\alpha_n)\in \mathbb N^n$, with $\alpha_i\not=0$ for some $i=1,\ldots,n$,  $|\alpha |=\alpha _1+\cdots +\alpha_n=\ell $ and $a_\alpha \in \mathbb C$. Here, for every $\alpha=(\alpha_1,\ldots,\alpha_n)\in \mathbb N^n$,  $\partial ^\alpha=\frac{\partial ^{\alpha_1+\cdots+\alpha_n}}{\partial x_1^{\alpha_1}\cdots\partial x_n^{\alpha _n}}$. It can be written $D^ {(\ell)}=\sum_{j=0 }^\ell\partial _\nu ^iD_{\ell-i}'$, where $\partial_\nu$ represents the differentiation operator along $\nu$ and $D_{\ell-i}'$ is a constant coefficient operator of order $\ell-i$ involving only differentiation in directions orthogonal to $\nu$. Let us denote by $\frak{q}$ the maximal order of differentiation along $\nu$, that is, $\frak q=\max\{i\in \{0,\ldots,\ell \}:D_{\ell-i}'\not=0\}$. Define, for every $\ell \in \mathbb{N}$, $1\leq \ell <2M$, the operator
$$
G_{\nu ,M}^{k,\ell}(f)=\left(\int_0^\infty \Big|t^{\ell /2+k}\partial_t^kD^{(\ell)}(I-t\Delta_\nu)^{-M}(f)\Big|^2\frac{dt}{t}\right)^{1/2}.
$$
In \cite[Theorem 1.2]{LSj4}, certain boundedness properties for other Littlewood-Paley functions involving the semigroups generated by $-\Delta_\nu$ were analyzed. The quantity $\frak{q}$ plays a relevant play.

Our main result is the following one.

\begin{thm}\label{Th1.1}
Let $\nu\in \mathbb R^n\setminus\{0\}$, $\rho >2$, $M>0$ and $k,\ell \in \mathbb N$, with $1\leq \ell <2M$. Assume that $\{t_j\}_{j\in \mathbb N}$ is a decreasing sequence in $(0,\infty )$. The operators $W_{\nu ,M,*}^k$, $g_{\nu, M}^k$ with $k\geq 1$, and $V_\rho (\{A_{\nu,M,t}^k\}_{t>0})$, $O(\{A_{\nu,M,t}^k\}_{t>0}, \{t_j\}_{j\in \mathbb N})$ when $M>n/2$, are bounded on $L^p(\mathbb R^n,\mu_\nu )$, $1<p<\infty$, and from $L^1(\mathbb R^n,\mu_\nu )$ into $L^{1,\infty}(\mathbb R^n,\mu_\nu )$. The operator $G_{\nu,M}^{k,\ell}$ is also bounded on $L^p(\mathbb R^n,\mu_\nu )$, $1<p<\infty$ and from $L^1(\mathbb R^n,\mu_\nu )$ into $L^{1,\infty}(\mathbb R^n,\mu_\nu )$ provided that $\mathfrak q\leq 2$. In addition, when $\mathfrak q>1$, there exists $C>0$ such that, for every $f\in L(1+\ln^+L)^{\mathfrak q/2-3/4}$,
$$
\mu_\nu (\{x\in \mathbb R^n:G_{\nu,M}^{k,\ell}(f)(x)>\lambda\})\leq C\int_{\mathbb R^n}\frac{|f(x)|}{\lambda}\Big(1+\ln^+\frac{|f(x)|}{\lambda}\Big)^{\mathfrak q/2-3/4}d\mu_\nu (x).
$$
\end{thm}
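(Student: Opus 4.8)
The plan is to handle all the operators in the statement within a single framework, treating each $T_{\nu,M}^k$ (and $G_{\nu,M}^{k,\ell}$) as a sublinear operator dominated by a Calderón–Zygmund operator with values in a Banach space $\mathbb B$ depending on the case: $\mathbb B=L^2((0,\infty),dt/t)$ for $g_{\nu,M}^k$ and $G_{\nu,M}^{k,\ell}$, $\mathbb B=E_\rho$ for the $\rho$-variation operator, $\mathbb B$ the oscillation space $O(\{t_j\}_{j\in\mathbb N})$ for the oscillation operator, and the maximal operator reduced to this scheme in the standard way. The basic tools are the subordination formula
$$(I-t\Delta_\nu)^{-M}=\frac{1}{\Gamma(M)}\int_0^\infty r^{M-1}e^{-r}W_{rt}^\nu\,dr,\qquad t>0,$$
valid since $-\Delta_\nu$ is positive, and the explicit heat kernel: conjugating $\Delta_\nu$ to $\Delta-|\nu|^2$ by multiplication by $e^{\langle\nu,\cdot\rangle}$, which is an isometry from $L^2(\mathbb R^n,\mu_\nu)$ onto $L^2(\mathbb R^n,dx)$, shows that $W_t^\nu$ has kernel $(4\pi t)^{-n/2}e^{-t|\nu|^2}e^{-\langle\nu,x+y\rangle}e^{-|x-y|^2/4t}$ with respect to $d\mu_\nu$. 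Applying $t^k\partial_t^k$ — and, for $G_{\nu,M}^{k,\ell}$, the operator $D^{(\ell)}$ — to the subordinated expression yields the kernel of $A_{\nu,M,t}^k$; the $t$-derivatives only bring down polynomial factors in $|x-y|^2/t$ and $t|\nu|^2$, absorbed by the Gaussian, and for $k\ge 1$ one has the cancellation $\int_{\mathbb R^n}A_{\nu,M,t}^k(x,y)\,d\mu_\nu(y)=0$ because $(I-t\Delta_\nu)^{-M}1=1$.

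For $1<p<\infty$ I would use that $\{W_t^\nu\}_{t>0}$ is a symmetric diffusion semigroup, so the heat-semigroup analogues of the maximal operator (Stein), of $g_{\nu,M}^k$, and of the $\rho$-variation and oscillation operators are bounded on $L^p(\mathbb R^n,\mu_\nu)$. The factor $t^k\partial_t^k$ is harmless at the level of spectral multipliers, and the change $t\mapsto rt$ is an isometry of $E_\rho$, of the oscillation space and of $L^2(dt/t)$; hence $T_{\nu,M}^k(f)$ is dominated pointwise by $\Gamma(M)^{-1}\int_0^\infty r^{M-1}e^{-r}\,(\text{heat-semigroup version at scale }rt)\,dr$, and Minkowski's inequality finishes this case. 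For $G_{\nu,M}^{k,\ell}$ I would factor $\partial_t^kD^{(\ell)}(I-t\Delta_\nu)^{-M}=D^{(\ell)}(-\Delta_\nu)^{-\ell/2}\circ(-\Delta_\nu)^{\ell/2}\partial_t^k(I-t\Delta_\nu)^{-M}$: the first factor is a bounded Riesz-type operator on $L^2(\mathbb R^n,\mu_\nu)$, and the spectral multiplier $t^{\ell/2+k}\lambda^{\ell/2}\partial_t^k(1+t\lambda)^{-M}$ of the second becomes $\lambda$-independent after the substitution $s=t\lambda$ and lies in $L^2((0,\infty),ds/s)$ precisely when $\ell<2M$; this gives the $L^2$ bound, from which $1<p<\infty$ follows by the Calderón–Zygmund estimates of the next step.

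The substantial part is the endpoint $L^1(\mathbb R^n,\mu_\nu)\to L^{1,\infty}(\mathbb R^n,\mu_\nu)$, which must be proved by hand because $\mu_\nu$ is not doubling; I would follow the strategy of \cite{LSjW} and \cite{LSj4}. Split each $\mathbb B$-valued kernel into a local part, supported where $|x-y|\le 1$, and a global part. On small scales $\mu_\nu$ is doubling, and the kernel bounds above, the cancellation for $k\ge 1$, and the $L^2$-boundedness present the local part as a standard $\mathbb B$-valued Calderón–Zygmund kernel, so the local part is of weak type $(1,1)$ by vector-valued Calderón–Zygmund theory. For the global part one proves directly the stronger
$$\int_{\mathbb R^n}\big\|A_{\nu,M,t}^k(x,y)\big\|_{\mathbb B}\,d\mu_\nu(x)\le C\qquad(y\in\mathbb R^n)$$
and its dual, using the decay coming from the Gaussian, the weights $e^{-\langle\nu,x+y\rangle}$, and the factor $e^{2\langle\nu,x\rangle}$ of $d\mu_\nu$; this is where the hypothesis $M>n/2$ enters for the variation and oscillation operators, guaranteeing finiteness of the $\mathbb B$-norm of the kernel (the resolvent kernel $\sim|x-y|^{2M-n}$ near the diagonal is bounded only if $2M\ge n$). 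No restriction on $M$ is needed for $g_{\nu,M}^k$ or the maximal operator.

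Finally, for $G_{\nu,M}^{k,\ell}$ the new ingredient is differentiation along $\nu$. When $\mathfrak q\le 2$, each $\nu$-derivative that hits $e^{-\langle\nu,x+y\rangle}$ costs only a bounded factor, the global estimate above survives, and the scheme closes to give $L^1\to L^{1,\infty}$. When $\mathfrak q>1$ the $\nu$-derivatives consume part of that decay and the global part fails to be of weak type $(1,1)$; here I would follow \cite[Theorem 1.2]{LSj4}, decomposing $f$ according to the dyadic size of $|f|/\lambda$, estimating $G_{\nu,M}^{k,\ell}$ on each piece through its $L^p$-bound for $p$ close to $1$ (or through $L^2$) on the relevant region, and summing a geometric-type series in which the mismatch between the lost powers of decay and the $L^p$-gain produces exactly the Orlicz weight $(1+\ln^+(|f|/\lambda))^{\mathfrak q/2-3/4}$. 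The hard part will be the sharp $\mathbb B$-valued kernel estimates for the global part after applying $t^k\partial_t^k$ and $D^{(\ell)}$ — tracking precisely how many powers of exponential decay are destroyed by the $\nu$-differentiations — and the bookkeeping in this last step needed for the exponent $\mathfrak q/2-3/4$ to come out on the nose.
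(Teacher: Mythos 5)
Your overall scheme matches the paper's: reduce to $\nu=(1,0,\dots,0)$, use the subordination formula to transfer $L^p$-boundedness ($1<p<\infty$) from the heat-semigroup versions (Stein, Le Merdy--Xu) via dilation-invariance of $L^2(dt/t)$ and $E_\rho$, and prove the $L^1\to L^{1,\infty}$ endpoint by a local/global split, applying vector-valued Calder\'on--Zygmund theory (in the spaces $L^2(dt/t)$, $E_{\rho,N}$, etc.) to the local part after proving $C/|x-y|^n$ size and $C/|x-y|^{n+1}$ gradient estimates. For $G_{\nu,M}^{k,\ell}$ you factor through Riesz transforms whereas the paper dominates $G_{\nu,M}^{k,\ell}(f)(x)$ pointwise by $C\,\| t^{\ell/2}D^{(\ell)}W_t^\nu(f)(x)\|_{L^2(dt/t)}$ via Minkowski and cites \cite[Theorem 1.2]{LSj4} for both the $L^p$ and all the endpoint assertions; the direct domination is more efficient since in any case the $\mathfrak q$-dependence at the endpoint must come from \cite{LSj4}. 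Those differences are matters of style. However, two points in your write-up are actually wrong.

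\textbf{The global part is not $L^1\to L^1$.} You propose to close the endpoint argument by proving
\[
\int_{\mathbb R^n}\big\|A_{\nu,M,t}^k(x,y)\big\|_{\mathbb B}\,d\mu_\nu(x)\le C\quad\text{uniformly in }y,
\]
for the global kernel, calling it ``stronger''. That estimate is false and would in any case imply $L^1(\mu_\nu)\to L^1(\mu_\nu)$ for the global operator, which is not true. The pointwise bound one obtains is $\|A_{\nu,M,\cdot}^k(x,y)\|_{\mathbb B}\lesssim e^{-(x_1+y_1)}e^{-|x-y|}|x-y|^{-n/2}\mathcal X_{\{|x-y|\ge1\}}$, and integrating this against $d\mu_\nu(x)=e^{2x_1}dx$ over $|x-y|\ge 1$ gives, after shifting $z=x-y$,
\[
\int_{|z|\ge1}e^{z_1}\,\frac{e^{-|z|}}{|z|^{n/2}}\,dz,
\]
which diverges: near the direction $z_1=|z|$ the exponential is of order $1$ and the remaining decay is only $|z|^{-1/2}$ along that cone. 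The paper handles the global part by recognizing the above positive kernel and invoking \cite[Lemma 6]{LSjW} and \cite[Proposition 4]{LSjW}, which give \emph{weak} type $(1,1)$ for that kernel operator---precisely the place where the non-doubling exponential growth of $\mu_\nu$ is felt. Your sketch does not supply any replacement for this step.

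\textbf{The hypothesis $M>n/2$.} You attribute it to finiteness of the $\mathbb B$-norm of the global kernel, arguing that the resolvent kernel $\sim|x-y|^{2M-n}$ ``near the diagonal'' must be bounded. This cannot be the issue: the global kernel is supported on $|x-y|\ge1$, and all the size and gradient estimates required for the local Calder\'on--Zygmund part hold for every $M>0$. In the paper, $M>n/2$ is used only in the variation/oscillation section, in the display \eqref{3.2}, to justify differentiating under the integral sign in \eqref{3.1} and thereby to establish continuity of $t\mapsto A_{\nu,M,t}^k(f)(x)$; this continuity is what makes $V_\rho$ and $O$ well-defined measurable functions (via a supremum over rationals). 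Your plan needs to address this measurability point somewhere.

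Finally, you invoke the cancellation $\int_{\mathbb R^n}A_{\nu,M,t}^k(x,y)\,d\mu_\nu(y)=0$ for $k\ge 1$; the paper's local argument does not use cancellation at all, since it has the $L^2$-boundedness of the operator and the kernel gradient estimate, which suffice for the H\"ormander condition. Mentioning it is harmless but it plays no role.
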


We have organized this paper by describing firstly the elements and  procedures used in our proofs (section 2) and then by developing the complete proofs in the subsequent sections.  

Throughout this paper $C$ and $c$ denote positive constants that can change in each occurrence.
\section{Preliminaries}
\subsection{Local and global parts of an operator}\label{SS2}
For every $\eta>0$ we define
$$
L_\eta=\{(x,y)\in \mathbb R^n\times \mathbb R^n: |x-y|<\eta\},
$$
and $G_\eta = (\mathbb R^n\times \mathbb R^n)\setminus L_\eta$. The letters $L$ and $G$ mean local and global, respectively.

We begin stating a property similar to the one showed in $\!\!$\cite{GCMST1} for the Gaussian setting which can be proved in a straightforward way.

\begin{prop}\label{Prop2.1}
There exists a sequence $\{x_j\}_{j\in \mathbb N}\subset \mathbb R^n$ such that

(i) $\mathbb R^n=\bigcup_{j\in \mathbb N}B(x_j,\frac{1}{10})$;

(ii) The family $\{B(x_j,\frac{1}{40})\}_{j\in \mathbb N}$ are pairwise disjoint;

(iii) For every $A>0$, $\sup_{x\in \mathbb R^n}\sum_{j\in \mathbb N}\mathcal X_{B(x_j,\frac{A}{10})}(x)<\infty$;

(iv) $L_{3/10}\subset \bigcup_{j\in \mathbb N}\big[B(x_j,\frac{1}{10})\times B(x_j,\frac{4}{10})\big]\subset L_{1/2}$;

(v) For every $\eta >0$ there exists $A>0$ such that $B(x_j,\frac{1}{10})\times \big[\mathbb R^n\setminus B(x_j,\frac{A}{10})\big]\subset G_\eta$;

(vi) If $A>0$ there exists $C>1$ such that, for every $j\in \mathbb N$,
$$
\frac{1}{C}e^{2\langle \nu,x_j\rangle}\int_E|f(x)|dx\leq \int_E|f(x)|d\mu_\nu(x)\leq Ce^{2\langle \nu, x_j\rangle }\int_E|f(x)|dx,\quad f\in L^1_{\rm loc}(\mathbb R^n),
$$
for every measurable set $E\subset B(x_j,\frac{A}{10})$. 
\end{prop}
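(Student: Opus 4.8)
\textbf{Plan of proof of Proposition \ref{Prop2.1}.}

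The sequence $\{x_j\}_{j\in\mathbb N}$ is constructed by a maximal/greedy argument that is completely metric in nature. Fix the lattice point set obtained as follows: choose a maximal subset $\{x_j\}_{j\in\mathbb N}$ of $\mathbb R^n$ with the property that $|x_i-x_j|\geq \frac{1}{20}$ for every $i\neq j$. Such a set exists by Zorn's lemma and is necessarily countable since the balls $B(x_j,\frac{1}{40})$ are pairwise disjoint and each has positive Lebesgue measure. Maximality immediately gives (i): if some $x\in\mathbb R^n$ were at distance $\geq\frac{1}{10}$ from every $x_j$, in particular at distance $\geq\frac{1}{20}$, we could add $x$ to the set, contradicting maximality; hence every $x$ lies in some $B(x_j,\frac{1}{10})$. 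Property (ii) is just the separation $|x_i-x_j|\geq\frac{1}{20}>2\cdot\frac{1}{40}$.

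For the bounded overlap property (iii), I would argue by volume counting with respect to Lebesgue measure: if $x\in B(x_j,\frac{A}{10})$ then $x_j\in B(x,\frac{A}{10})$, and the balls $B(x_j,\frac{1}{40})$ are pairwise disjoint and contained in $B(x,\frac{A}{10}+\frac{1}{40})$; comparing volumes,
$$
\#\Big\{j:x\in B(x_j,\tfrac{A}{10})\Big\}\leq \frac{|B(x,\frac{A}{10}+\frac{1}{40})|}{|B(0,\frac{1}{40})|}=\Big(\tfrac{A}{10}+\tfrac{1}{40}\Big)^n 40^n,
$$
a bound independent of $x$. Property (iv): the left inclusion follows because if $|x-y|<\frac{3}{10}$, taking $j$ with $x\in B(x_j,\frac{1}{10})$ (by (i)) gives $|y-x_j|\leq|y-x|+|x-x_j|<\frac{3}{10}+\frac{1}{10}=\frac{4}{10}$; the right inclusion because $(x,y)\in B(x_j,\frac{1}{10})\times B(x_j,\frac{4}{10})$ forces $|x-y|<\frac{1}{10}+\frac{4}{10}=\frac{1}{2}$. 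Property (v): given $\eta>0$, choose $A$ with $\frac{A}{10}-\frac{1}{10}\geq\eta$, i.e. $A\geq 10\eta+1$; then $x\in B(x_j,\frac{1}{10})$ and $y\notin B(x_j,\frac{A}{10})$ yield $|x-y|\geq|x_j-y|-|x_j-x|>\frac{A}{10}-\frac{1}{10}\geq\eta$, so $(x,y)\in G_\eta$.

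Finally, property (vi) is where the structure of the measure enters, and it is the only genuinely non-metric step. On a ball $B(x_j,\frac{A}{10})$ of bounded radius the density $e^{2\langle\nu,x\rangle}$ oscillates by a bounded factor: for $x\in B(x_j,\frac{A}{10})$ one has $|\langle\nu,x\rangle-\langle\nu,x_j\rangle|\leq|\nu|\,\frac{A}{10}$, hence
$$
e^{-2|\nu|A/10}\,e^{2\langle\nu,x_j\rangle}\leq e^{2\langle\nu,x\rangle}\leq e^{2|\nu|A/10}\,e^{2\langle\nu,x_j\rangle},\qquad x\in B\Big(x_j,\tfrac{A}{10}\Big),
$$
and integrating $|f|$ against $dx$ over $E\subset B(x_j,\frac{A}{10})$ gives (vi) with $C=e^{2|\nu|A/10}$, uniformly in $j$. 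I do not foresee a real obstacle here: every item reduces to an elementary estimate, the only point requiring a moment's care being the choice of the separation constant $\frac{1}{20}$ so that both (i) (via radius $\frac{1}{10}$) and (ii) (via radius $\frac{1}{40}$) come out with the stated constants, and the observation that all relevant balls have radius bounded by a fixed multiple of $A$, which is what makes the exponential weight comparable to a constant on each of them.
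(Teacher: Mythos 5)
Your proof is correct: the maximal $\tfrac1{20}$-separated net, the volume-counting bound for the overlap, the triangle-inequality inclusions for (iv)--(v), and the bound $|\langle\nu,x-x_j\rangle|\le|\nu|A/10$ on $B(x_j,\tfrac{A}{10})$ for (vi) all go through with the stated constants. The paper itself offers no proof (it only remarks that the statement is straightforward, in analogy with the Gaussian lattice of \cite{GCMST1}), and your argument is exactly the standard construction the authors have in mind.
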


Let $\mathbb B_1$ and $\mathbb B_2$ be Banach spaces. Suppose that $T$ is a bounded linear operator from $L_{\mathbb B_1}^q(\mathbb R^n,\mu_\nu)$ into $L_{\mathbb B_2}^q(\mathbb R^n,\mu_\nu)$ or from  $L_{\mathbb B_1}^q(\mathbb R^n,\mu_\nu)$ into $L_{\mathbb B_2}^{q,\infty}(\mathbb R^n,\mu_\nu)$, for some $1\leq q<\infty$.

We define a function $\psi \in C^\infty (\mathbb R)$ satisfying that

(i) $0\leq \psi \leq 1$,

(ii) $\psi (x)=1$, $|x|<1$, and $\psi (x)=0$, $|x|\geq 2$,\vspace*{0.1cm}

(iii) $\displaystyle |\psi'(x)|\leq \frac{C}{|x|}$, $x\in \mathbb R\setminus\{0\}$, for some $C>0$,\vspace*{0.1cm}

\noindent and consider the function $\varphi (x,y)=\psi (|x-y|)$, $x,y\in \mathbb R^n$. It follows that

    (a) $\varphi (x,y)\in [0,1]$,

    (b) $\varphi (x,y)=1$, $|x-y|\leq 1$, and  $\varphi (x,y)=0$, $|x-y|\geq 2$,

    (c) $|\nabla _x\varphi (x,y)|+|\nabla _y\varphi (x,y)|\leq \frac{C}{|x-y|}$, $x\not=y$.\vspace*{0.1cm}

\noindent We decompose the operator $T$ as $T(f)=T_{\rm loc}(f)+T_{\rm glob}(f)$, where $T_{\rm loc}(f)(x)=T(f(\cdot)\varphi (x,\cdot ))(x)$, $x\in \mathbb R^n$.

\begin{prop}\label{Prop2.2}
Let $\mathbb B_1$ and $\mathbb B_2$ be Banach spaces and $1\leq q<\infty$. Suppose that $T$ is a bounded linear operator from  $L_{\mathbb B_1}^q(\mathbb R^n,\mu_\nu)$ into $L_{\mathbb B_2}^q(\mathbb R^n,\mu_\nu)$ (respectively, $L_{\mathbb B_2}^{q,\infty}(\mathbb R^n,\mu_\nu)$). Assume that there exists a function $K:(\mathbb R^n\times \mathbb R^n)\setminus \mathfrak D\longrightarrow \mathcal L(\mathbb B_1,\mathbb B_2)$, where $\mathfrak D=\{(x,x),x\in \mathbb R^n\}$ and $\mathcal L(\mathbb B_1,\mathbb B_2)$ denotes the space of linear bounded operators from $\mathbb B_1$ into $\mathbb B_2$, such that, for every $f\in L_c^\infty (\mathbb R^n,\mathbb B_1)$, the space of essentially bounded $\mathbb B_1$-valued functions with compact support,
$$
T(f)(x)=\int_{\mathbb R^n}K(x,y)f(y)dy,\quad x\not\in \supp f,
$$
and
$$
\|K(x,y)\|_{\mathcal L(\mathbb B_1,\mathbb B_2)}\leq \frac{C}{|x-y|^n},\quad x,y\in \mathbb R^n,\,0<|x-y|<2.
$$
Then, $T_{\rm loc}$ is bounded from  $L_{\mathbb B_1}^q(\mathbb R^n,\mu_\nu)$ into $L_{\mathbb B_2}^q(\mathbb R^n,\mu_\nu)$ (respectively, $L_{\mathbb B_2}^{q,\infty}(\mathbb R^n,\mu_\nu)$) and from $L_{\mathbb B_1}^q(\mathbb R^n,dx)$ into $L_{\mathbb B_2}^q(\mathbb R^n,dx)$ (respectively, $L_{\mathbb B_2}^{q,\infty}(\mathbb R^n,dx)$).
\end{prop}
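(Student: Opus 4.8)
The plan is to exploit that $T_{\rm loc}$ is a \emph{local} operator: since $\varphi(x,\cdot)$ is supported in $\overline{B(x,2)}$, the value $T_{\rm loc}(f)(x)$ depends only on the restriction of $f$ to a ball of radius $2$ around $x$. On balls of bounded radius the measures $\mu_\nu$ and $dx$ are comparable, the comparison constant on $B(x_j,\tfrac{A}{10})$ being $\sim e^{2\langle\nu,x_j\rangle}$ (Proposition~\ref{Prop2.1}(vi)); this constant cancels in the final estimates, which is exactly why both the $L^q(\mu_\nu)$ and the $L^q(dx)$ boundedness of $T_{\rm loc}$ will come out of the single hypothesis that $T$ is bounded on $L^q(\mu_\nu)$ (resp.\ into $L^{q,\infty}(\mu_\nu)$). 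Throughout I would work with $f\in L_c^\infty(\mathbb R^n,\mathbb B_1)$, the general case following by density.

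The core is the pointwise estimate: there is $C>0$ such that, for every $j\in\mathbb N$ and every $x\in B(x_j,\tfrac1{10})$,
$$
\|T_{\rm loc}(f)(x)\|_{\mathbb B_2}\le \big\|T\big(f\,\mathcal X_{B(x_j,11/10)}\big)(x)\big\|_{\mathbb B_2}+C\int_{B(x_j,21/10)}\|f(y)\|_{\mathbb B_1}\,dy.
$$
To obtain it I would split $f\varphi(x,\cdot)=f\mathcal X_{B(x,1)}\varphi(x,\cdot)+f\mathcal X_{\{1\le|x-\cdot|<2\}}\varphi(x,\cdot)$. In the second summand $x$ lies off the support and $|x-y|<2$ there, so the integral representation of $T$ together with the size bound $\|K(x,y)\|_{\mathcal L(\mathbb B_1,\mathbb B_2)}\le C|x-y|^{-n}$ gives a bound by $C\int_{1\le|x-y|<2}\|f(y)\|_{\mathbb B_1}\,dy\le C\int_{B(x_j,21/10)}\|f(y)\|_{\mathbb B_1}\,dy$. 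In the first summand one has $f\mathcal X_{B(x,1)}\varphi(x,\cdot)=f\mathcal X_{B(x,1)}$ since $\varphi(x,y)=1$ for $|x-y|<1$; as $B(x,1)\subset B(x_j,11/10)$, one replaces $\mathcal X_{B(x,1)}$ by $\mathcal X_{B(x_j,11/10)}$, the error $T\big(f(\mathcal X_{B(x_j,11/10)}-\mathcal X_{B(x,1)})\big)(x)$ being again controlled by the kernel (its argument is supported in $\{1\le|x-\cdot|\}\cap B(x_j,11/10)$, where $|x-y|<12/10<2$), hence bounded by $C\int_{B(x_j,11/10)}\|f(y)\|_{\mathbb B_1}\,dy$. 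The point I would stress is that this two-stage replacement $\varphi\to\mathcal X_{B(x,1)}\to\mathcal X_{B(x_j,11/10)}$ keeps every distance $|x-y|$ entering a kernel integral strictly below $2$, inside the range where the size estimate is assumed.

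The remaining step is summation. Since the balls $B(x_j,\tfrac1{10})$ cover $\mathbb R^n$ (Proposition~\ref{Prop2.1}(i)), $\|T_{\rm loc}(f)\|_{L^q(\mu_\nu)}^q\le\sum_j\int_{B(x_j,1/10)}\|T_{\rm loc}(f)(x)\|_{\mathbb B_2}^q\,d\mu_\nu(x)$; insert the pointwise bound. The main term sums by the $L^q(\mu_\nu)$-boundedness of $T$ and the bounded overlap of $\{B(x_j,11/10)\}$ (Proposition~\ref{Prop2.1}(iii)): $\sum_j\|T(f\mathcal X_{B(x_j,11/10)})\|_{L^q(\mu_\nu)}^q\lesssim\sum_j\int_{B(x_j,11/10)}\|f\|_{\mathbb B_1}^q\,d\mu_\nu\lesssim\|f\|_{L^q(\mu_\nu)}^q$. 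For the error term, Hölder's inequality on the fixed–radius ball $B(x_j,21/10)$, the estimate $\mu_\nu(B(x_j,\tfrac1{10}))\sim e^{2\langle\nu,x_j\rangle}$, and repeated use of (vi) turn $\mu_\nu(B(x_j,\tfrac1{10}))\big(\int_{B(x_j,21/10)}\|f\|_{\mathbb B_1}\,dy\big)^q$ into $\lesssim\int_{B(x_j,21/10)}\|f\|_{\mathbb B_1}^q\,d\mu_\nu$, which again sums by (iii). For the $L^q(dx)\to L^q(dx)$ statement I would run the identical computation with $dx$ for $d\mu_\nu$: on $B(x_j,\tfrac1{10})$ property (vi) gives, with $g_j=f\mathcal X_{B(x_j,11/10)}$, that $\int_{B(x_j,1/10)}\|T(g_j)\|^q\,dx\sim e^{-2\langle\nu,x_j\rangle}\int_{B(x_j,1/10)}\|T(g_j)\|^q\,d\mu_\nu\le e^{-2\langle\nu,x_j\rangle}\|T\|^q\|g_j\|_{L^q(\mu_\nu)}^q\sim\|T\|^q\|g_j\|_{L^q(dx)}^q$, the exponential weights cancelling, while the error term is handled directly by Hölder. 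The weak-type versions (for both measures) go the same way: the $\|T(g_j)(x)\|$-part is controlled by the weak boundedness of $T$, and over $B(x_j,\tfrac1{10})$ the constant error $C\int_{B(x_j,21/10)}\|f\|_{\mathbb B_1}\,dy$ can exceed $\lambda/2$ only for those $j$ with $\int_{B(x_j,21/10)}\|f\|_{\mathbb B_1}\,dy>\lambda/(2C)$, for which $\mu_\nu(B(x_j,\tfrac1{10}))\le\big(\tfrac{2C}{\lambda}\int_{B(x_j,21/10)}\|f\|_{\mathbb B_1}\,dy\big)^q\mu_\nu(B(x_j,\tfrac1{10}))\lesssim\lambda^{-q}\int_{B(x_j,21/10)}\|f\|_{\mathbb B_1}^q\,d\mu_\nu$ (and analogously with $dx$), summing to $\lesssim\lambda^{-q}\|f\|^q$ by (iii).

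I expect the only real difficulty to be the pointwise inequality of the second paragraph — more precisely, peeling the cutoff $\varphi(x,\cdot)$ off and replacing it by a characteristic function of a ball \emph{centered at the fixed point $x_j$} while never evaluating $K$ where its size is uncontrolled; everything else (the transfer between $\mu_\nu$ and Lebesgue measure, and the summation in $j$) is a routine use of the local-doubling structure of Proposition~\ref{Prop2.1}, namely the bounded overlap (iii) and the local comparability (vi).
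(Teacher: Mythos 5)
Your argument is correct and follows essentially the same route as the paper: on each covering ball $B(x_j,\tfrac1{10})$ you compare $T_{\rm loc}(f)$ with $T$ applied to $f$ cut off to a slightly larger concentric ball, control the discrepancy through the kernel representation and the size bound on the region where $|x-y|$ is bounded below and stays below $2$, and then sum using the bounded overlap (iii) and the local comparability of $\mu_\nu$ and $dx$ (vi). The only differences are cosmetic: the paper packages the main term as $\sum_j\mathcal X_{B_j}T(\mathcal X_{4B_j}f)$ and treats the error operator by $L^1$--$L^\infty$ interpolation, whereas you use H\"older on fixed-radius balls and write out the weak-type case explicitly; both are routine.
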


\begin{proof}
Assume that $T$ is a bounded linear operator from $L_{\mathbb B_1}^q(\mathbb R^n,\mu_\nu)$ into $L_{\mathbb B_2}^q(\mathbb R^n,\mu_\nu)$.
    Denote, for every $j\in \mathbb N$, $B_j=B(x_j,\frac{1}{10})$, where $\{x_j\}_{j\in \mathbb N}$ is the sequence in Proposition \ref{Prop2.1}. As in \cite[Lemma 3.2.6]{Sa} we define, for every $f\in C_c^\infty (\mathbb R^n,\mathbb B_1)$,
    $$
    \mathbb T (f)=\sum_{j\in \mathbb N}\mathcal X_{B_j}T(\mathcal X_{4B_j}f).
    $$
     Let us see that $\mathbb T$ is bounded from $L_{\mathbb B_1}^q(\mathbb R^n,\mu_\nu)$ into $L_{\mathbb B_2}^q(\mathbb R^n,\mu_\nu)$ and from  $L_{\mathbb B_1}^q(\mathbb R^n,dx)$ into $L_{\mathbb B_2}^q(\mathbb R^n,dx)$. Let $f\in C_c^\infty (\mathbb R^n,\mathbb B_1)$. By taking into account the properties of the sequence of balls $\{B_j\}_{j\in \mathbb N}$ given in Proposition \ref{Prop2.1} it follows that
\begin{align*}
\big\|\mathbb T(f)\big\|_{L^q_{\mathbb B_2}(\mathbb R^n,\mu_\nu)}^q&=\int_{\mathbb R^n}\big\|\sum_{j\in \mathbb N}\mathcal X_{B_j}(x)T(\mathcal X_{4B_j}f)(x)\big\|_{\mathbb B_2}^qd\mu _\nu (x)\leq C\sum_{j\in \mathbb N}\int_{\mathbb R^n}\big\|T(\mathcal X_{4B_j}f)(x)\big\|_{\mathbb B_2}^qd\mu _\nu (x)\\
&\leq C\sum_{j\in \mathbb N}\int_{4B_j}\big\|f(x)\big\|_{\mathbb B_1}^qd\mu _\nu (x)\leq C\big\|f\big\|_{L^q_{\mathbb B_1}(\mathbb R^n,\mu_\nu )}^q.
\end{align*}
On the other hand, Proposition \ref{Prop2.1} (vi) leads to
\begin{align*}
\big\|\mathbb T(f)\big\|_{L^q_{\mathbb B_2}(\mathbb R^n,dx)}^q&\leq C\sum_{j\in \mathbb N}\int_{B_j}\big\|T(\mathcal X_{4B_j}f)(x)\big\|_{\mathbb B_2}^qdx\leq C\sum_{j\in \mathbb N}\int_{B_j}e^{-2\langle \nu,x_j\rangle}\big\|T(\mathcal X_{4B_j}f)(x)\big\|_{\mathbb B_2}^qd\mu _\nu (x)\\
&\hspace{-1cm}\leq C\sum_{j\in \mathbb N}e^{-2\langle \nu,x_j\rangle}\int_{4B_j}\big\|f(x)\big\|_{\mathbb B_1}^qd\mu_\nu(x)\leq C\sum_{j\in \mathbb N}\int_{4B_j}\big\|f(x)\big\|_{\mathbb B_1}^qdx\leq C\|f\|_{L^q_{\mathbb B_1}(\mathbb R^n,dx)}^q.
\end{align*}

To show the boundedness properties for $T_{\rm loc}$, as in \cite[p. 89]{Sa} we write
$$
T_{\rm loc}(f)(x)=T(\mathcal X_{4B_j}f)(x)+\int_{\mathbb R^n}(\varphi (x,y)-\mathcal X_{4B_j}(y))K(x,y)f(y)dy,\quad x\in \mathbb R^n\mbox{ and }j\in \mathbb N,
$$
from which it is deduced that
$$
\|T_{\rm loc}(f)(x)\|_{\mathbb B_2}\leq \|\mathbb T(f)(x)\|_{\mathbb B_2}+\tau (\|f\|_{\mathbb B_1})(x),\quad x\in \mathbb R^n,
$$
where
$$
\tau (\|f\|_{\mathbb B_1})(x)=\sum_{j\in \mathbb N}\mathcal X_{B_j}(x)\int_{\mathbb R^n}|\varphi (x,y)-\mathcal X_{4B_j}(y)|\|K(x,y)\|_{\mathcal L(\mathbb B_1,\mathbb B_2)}\|f(y)\|_{\mathbb B_1}dy,\quad x\in \mathbb R^n.
$$
Let us show that for every $1\leq p\leq \infty$, 
\begin{equation}\label{interpdx}
\|\tau (\|f\|_{\mathbb B_1})\|_{L^p(\mathbb R^n,dx)}\leq C\|f\|_{L^p_{\mathbb B_1}(\mathbb R^n,dx)},
\end{equation}
and 
\begin{equation}\label{interpdmu}
\|\tau (\|f\|_{\mathbb B_1})\|_{L^p(\mathbb R^n,\mu_\nu)}\leq C\|f\|_{L^p_{\mathbb B_1}(\mathbb R^n,\mu_\nu)}.
\end{equation}
Thus, we can conclude that $T_{\rm loc}$ is bounded from $L^q_{\mathbb B_1}(\mathbb R^n,\mu_\nu)$ into $L^q_{\mathbb B_2}(\mathbb R^n,\mu_\nu)$ and also, from $L^q_{\mathbb B_1}(\mathbb R^n,dx)$ into $L^q_{\mathbb B_2}(\mathbb R^n,dx)$.

Observe that if $x\in B_j$ and $\varphi (x,y)-\mathcal X_{4B_j}(y)\not=0$, then $(x,y)\in L_2\setminus L_{3/10}$ and, consequently,
$$
\tau (\|f\|_{\mathbb B_1})(x)\leq C\sum_{j\in \mathbb N}\mathcal X_{B_j}(x)\int_{|x-y|<2}\|f(y)\|_{\mathbb B_1}dy\leq C\int_{|x-y|<2}\|f(y)\|_{\mathbb B_1}dy,\quad x\in \mathbb R^n.
$$
Thus,
$$
\|\tau (\|f\|_{\mathbb B_1})\|_{L^1(\mathbb R^n,dx)}\leq C\int_{\mathbb R^n}\|f(y)\|_{\mathbb B_1}\int_{|x-y|<2}dxdy\leq C\|f\|_{L^1_{\mathbb B_1}(\mathbb R^n,dx)}, 
$$
and 
$$
\|\tau (\|f\|_{\mathbb B_1})\|_{L^\infty(\mathbb R^n,dx)}\leq C\|f\|_{L^\infty_{\mathbb B_1}(\mathbb R^n,dx)}.
$$
By using interpolation \eqref{interpdx} is obtained.

On the other hand, we can write 
\begin{align*}
\int_{\mathbb R^n}\tau (\|f\|_{\mathbb B_1})(x)d\mu_\nu(x)&\leq C\int_{\mathbb R^n}\|f(y)\|_{\mathbb B_1}\mu_\nu (B(y,2))dy\leq C\int_{\mathbb R^n}\|f(y)\|_{\mathbb B_1}\int_{|x-y|<2}e^{2x_1}dxdy\\
&\leq C\int_{\mathbb R^n}\|f(y)\|_{\mathbb B_1}e^{2y_1}\int_{|x-y|<2}e^{2(x_1-y_1)}dxdy\leq C\int_{\mathbb R^n}\|f(y)\|_{\mathbb B_1}d\mu_\nu (y),
\end{align*}
and also $\tau (\|f\|_{\mathbb B_1})\|_{L^\infty (\mathbb R^n,\mu _\nu )}\leq \|f\|_{L^\infty_{\mathbb B_1}(\mathbb R^n,\mu_\nu )}$ (Note that $L^\infty (\mathbb R^n,\mu_\nu)=L^\infty (\mathbb R^n,dx)$). Again interpolation leads to \eqref{interpdmu}.

If $T$ is a bounded linear operator from $L^q_{\mathbb B_1}(\mathbb R^n,\mu_\nu )$ into $L^{q,\infty}_{\mathbb B_2}(\mathbb R^n,\mu_\nu )$ the property can be proved in a similar way.
\end{proof}

\begin{rem}
    The properties in Proposition \ref{Prop2.2} also hold when we assume that $T$ is a bounded linear operator from $L_{\mathbb B_1}^q(\mathbb R^n,dx)$ into $L^q_{\mathbb B_2}(\mathbb R^n,dx)$ (respectively, $L^{q,\infty }_{\mathbb B_2}(\mathbb R^n,dx)$).
\end{rem}

\begin{prop}\label{Prop2.3}
Let $\mathbb B_1$ and $\mathbb B_2$ be Banach spaces and $1\leq q<\infty$. Suppose that $T$ is a linear operator defined in $L^q_{\mathbb B_1}(\mathbb R^n,dx)$ (respectively, $L^q_{\mathbb B_1}(\mathbb R^n,\mu_\nu)$) and taking values in $L^{q,\infty}_{\mathbb B_2}(\mathbb R^n,dx)$ (respectively, $L^{q,\infty}_{\mathbb B_2}(\mathbb R^n,\mu_\nu)$). Then the following properties are equivalent.

(a) $T_{\rm loc}$ is bounded from $L^q_{\mathbb B_1}(\mathbb R^n,dx)$ into $L^q_{\mathbb B_2}(\mathbb R^n,dx)$ (respectively, $L^{q,\infty}_{\mathbb B_2}(\mathbb R^n,dx)$).

(b) $T_{\rm loc}$ is bounded from $L^q_{\mathbb B_1}(\mathbb R^n,\mu_\nu)$ into $L^q_{\mathbb B_2}(\mathbb R^n,\mu_\nu )$ (respectively, $L^{q,\infty}_{\mathbb B_2}(\mathbb R^n,\mu_\nu)$).
\end{prop}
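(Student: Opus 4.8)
The plan is to run a covering argument based on Proposition~\ref{Prop2.1}, exploiting two features. First, $T_{\rm loc}$ is genuinely local: since $\varphi(x,\cdot)$ is supported in $\{y:|x-y|\le 2\}$, the value $T_{\rm loc}(f)(x)$ depends only on the restriction of $f$ to the ball of radius $2$ centred at $x$. Second, by Proposition~\ref{Prop2.1}(vi), on every ball of radius comparable to $1$ the measure $\mu_\nu$ is comparable to a constant multiple of Lebesgue measure, and this estimate is two-sided. Because of this symmetry it suffices to establish the implication (a)~$\Rightarrow$~(b); exchanging the roles of $dx$ and $\mu_\nu$ then gives (b)~$\Rightarrow$~(a). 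Note also that for any $x$ the function $f(\cdot)\varphi(x,\cdot)$, and more generally the truncations $f\mathcal X_{\widetilde B_j}$ introduced below, are compactly supported, hence belong to both $L^q_{\mathbb B_1}(\mathbb R^n,dx)$ and $L^q_{\mathbb B_1}(\mathbb R^n,\mu_\nu)$, and thus lie in the domain of $T$ irrespective of which of these two spaces $T$ is declared on.

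Fix the sequence $\{x_j\}_{j\in\mathbb N}$ of Proposition~\ref{Prop2.1}, set $B_j=B(x_j,\tfrac1{10})$ and $\widetilde B_j=B(x_j,\tfrac{21}{10})$, and record the locality identity: if $x\in B_j$ then $\supp\varphi(x,\cdot)\subset\widetilde B_j$, so $f(\cdot)\varphi(x,\cdot)=(f\mathcal X_{\widetilde B_j})(\cdot)\varphi(x,\cdot)$ and therefore $T_{\rm loc}(f)(x)=T_{\rm loc}(f\mathcal X_{\widetilde B_j})(x)$ for every $x\in B_j$. Assume (a). Given $f\in L^q_{\mathbb B_1}(\mathbb R^n,\mu_\nu)$, since $\mathbb R^n=\bigcup_jB_j$ we get $\|T_{\rm loc}(f)\|_{L^q_{\mathbb B_2}(\mathbb R^n,\mu_\nu)}^q\le\sum_j\int_{B_j}\|T_{\rm loc}(f\mathcal X_{\widetilde B_j})(x)\|_{\mathbb B_2}^q\,d\mu_\nu(x)$; by Proposition~\ref{Prop2.1}(vi) on $B_j$ this is $\le C\sum_je^{2\langle\nu,x_j\rangle}\|T_{\rm loc}(f\mathcal X_{\widetilde B_j})\|_{L^q_{\mathbb B_2}(\mathbb R^n,dx)}^q$, which by hypothesis (a) is $\le C\sum_je^{2\langle\nu,x_j\rangle}\int_{\widetilde B_j}\|f(x)\|_{\mathbb B_1}^q\,dx$; a second application of Proposition~\ref{Prop2.1}(vi), now on $\widetilde B_j$, bounds this by $C\sum_j\int_{\widetilde B_j}\|f(x)\|_{\mathbb B_1}^q\,d\mu_\nu(x)$, and the bounded-overlap property Proposition~\ref{Prop2.1}(iii) for $\{\widetilde B_j\}_j$ finally yields $C\|f\|_{L^q_{\mathbb B_1}(\mathbb R^n,\mu_\nu)}^q$. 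The weak-type case is identical with level sets in place of $L^q$-norms: $\mu_\nu(\{\|T_{\rm loc}(f)\|_{\mathbb B_2}>\lambda\})\le\sum_j\mu_\nu(\{x\in B_j:\|T_{\rm loc}(f\mathcal X_{\widetilde B_j})(x)\|_{\mathbb B_2}>\lambda\})$, and one runs the same chain of estimates, using the weak $(q,q)$ bound for $T_{\rm loc}$ with respect to $dx$ supplied by (a) at the appropriate step.

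There is no real obstacle here: the argument is a bookkeeping exercise with the covering of Proposition~\ref{Prop2.1}, parallel to the estimates already performed for the operator $\mathbb T$ in the proof of Proposition~\ref{Prop2.2}. The only points deserving a line of care are the locality identity $T_{\rm loc}(f)(x)=T_{\rm loc}(f\mathcal X_{\widetilde B_j})(x)$ on $B_j$ (which is precisely where the inclusion $\supp\varphi(x,\cdot)\subset\widetilde B_j$ enters) and the remark that every truncated function occurring in the proof lies in the domain of $T$ because $L^q(\mathbb R^n,dx)$ and $L^q(\mathbb R^n,\mu_\nu)$ coincide on bounded sets.
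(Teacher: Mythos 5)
Your proof is correct and takes essentially the same route as the paper: cover $\mathbb R^n$ by the unit-scale balls $B_j$ of Proposition~\ref{Prop2.1}, use the locality identity $T_{\rm loc}(f)(x)=T_{\rm loc}(f\mathcal X_{21B_j})(x)$ on $B_j$, pass between $d\mu_\nu$ and $dx$ via part (vi), apply the hypothesis, and finish with bounded overlap. Your explicit remark that the truncations $f\mathcal X_{\widetilde B_j}$ lie in both $L^q(\mathbb R^n,dx)$ and $L^q(\mathbb R^n,\mu_\nu)$ (so that $T$ is always applied on its declared domain) is a point the paper glosses over, and is a welcome bit of extra care.
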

\begin{proof}
 Consider the family of balls $\{B_j=B(x_j,\frac{1}{10})\}_{j\in \mathbb N}$ given in Proposition \ref{Prop2.1}. Assume that $T_{\rm loc}$ is a bounded operator from $L^q_{\mathbb B_1}(\mathbb R^n,dx)$ into $L^q_{\mathbb B_2}(\mathbb R^n,dx)$. According to properties in Proposition \ref{Prop2.1} we can write
$$
\int_{\mathbb R^n}\|T_{\rm loc}(f)(x)\|_{\mathbb B_2}^qd\mu_\nu (x)\leq C\sum_{j\in \mathbb N}\int_{B_j}\|T_{\rm loc}(f)(x)\|_{\mathbb B _2}^qd\mu_\nu (x)\leq C \sum_{j\in \mathbb N}e^{\langle \nu,x_j\rangle}\int_{B_j}\|T_{\rm loc}(f)(x)\|_{\mathbb B _2}^qdx.
$$
We now observe that for every $j\in \mathbb N$, $T_{\rm loc}(f)(x)= T_{\rm loc}(f\mathcal X_{21B_j})(x)$, $x\in B_j$. Thus, the boundedness property for $T_{\rm loc}$ and Proposition \ref{Prop2.1} (iii) and  (vi),  lead to
\begin{align*}
\int_{\mathbb R^n}\|T_{\rm loc}(f)(x)\|_{\mathbb B_2}^qd\mu_\nu (x)&\leq C \sum_{j\in \mathbb N}e^{\langle \nu,x_j\rangle}\int_{21B_j}\|f(x)\|_{\mathbb B _1}^qdx\leq C\int_{\mathbb R^n}\|f(x)\|_{\mathbb B _1}^qd\mu_\nu(x).
\end{align*}

Similar reasoning can be followed to establish that {\it (b)} implies {\it (a)}, and also that the properties are equivalent when the operators are of weak type $(q,q)$.
\end{proof}

\subsection{About the proof of Theorem \ref{Th1.1}}

Let $k\in \mathbb N$ and $M>0$. By arguing as in \cite[\S 1.1]{LSjW}, it can be seen that it is enough to establish our results when $\nu=(1,0,\ldots,0)$. Thus, from now on we assume that $\nu=(1,0,\ldots,0)$. For every $t>0$, $W_t^\nu$ is an integral operator given, for every $f\in L^p(\mathbb R^n, \mu_\nu)$, $1\leq p < \infty$, by
$$
W_t^\nu(f)(x) = \int_{\mathbb R^n} W_t^\nu(x,y) f(y)d\mu_\nu(y),\;\; x\in \mathbb R^n,
$$
where
$$
W_t^\nu(x,y) = \frac{1}{(4\pi t)^{\frac{n}{2}}}e^{-x_1-y_1}e^{-t}e^{-\frac{|x-y|^2}{4t}},\;\; x=(x_1,x'),\,y=(y_1,y') \in \mathbb R\times \mathbb R^{n-1}.
$$

We also observe that the operators under consideration are almost linear (\!\cite[p. 481, Def. 1.20]{GCRdF}). Indeed, we can write $W_{\nu,M,*}^k(f)=\|A_{\nu ,M,\cdot}^k(f)\|_{L^\infty ((0,\infty ),dt)}$, $g_{\nu,M}^k(f)=\|A_{\nu ,M,\cdot}^k(f)\|_{L^2((0,\infty ),\frac{dt}{t})}$, $V_\rho (\{A_{\nu ,M,t}^k\}_{t>0})(f)=\|A_{\nu ,M,\cdot}^k(f)\|_{E_\rho}$, 
$O(\{A_{\nu ,M,t}^k\}_{t>0}, \{t_j\}_{j\in \mathbb N})(f)=\|A_{\nu ,M,\cdot}^k(f)\|_{O(\{t_j\}_{j\in \mathbb N})}$ and
$$
G_{\nu,M}^{k,\ell}(f)=\|t^{\ell/2+k}\partial_t^kD^{(\ell)}(I-t\Delta_\nu)^{-M}(f)\|_{L^2((0,\infty ),\frac{dt}{t})}.
$$

Let $f\in C_c^\infty (\mathbb R^n)$, the space of smooth functions with compact support in $\mathbb R^n$. We have that
\begin{equation}\label{If}
(I-t\Delta_\nu )^{-M}f(x)=\frac{t^{-M}}{\Gamma (M)}\int_0^\infty e^{-u/t}W_u^\nu (f)(x)u^{M-1}du,\quad x\in \mathbb R^n.
\end{equation}
We can write
$$
t^k\partial_t^k(I-t\Delta_\nu )^{-M}f(x)=\frac{1}{\Gamma (M)}\int_0^\infty t^k\partial _t^k(t^{-M}e^{-u/t})W_u^\nu (f)(x)u^{M-1}du,\quad x\in  \mathbb R^n.
$$
Then, the $L^p$-boundedness properties of the maximal operator will be deduced from the corresponding properties of the maximal operator defined by the semigroup $\{W_t^\nu \}_{t>0}$ established in \cite[Theorem, p. 73]{StLP} and \cite[Theorem 2]{LSjW}. We also obtain adequate bounds for the operator $G_{\nu,M}^{k,\ell}$ which allow us to proceed as in \cite[Theorem 1.2]{LSj4} to get our result.

In order to prove Theorem \ref{Th1.1} for the remainder operators we proceed by using a localization argument. We decompose, for every $t>0$, the operator $A_{\nu,M,t}^k$ as follows
$$
A_{\nu ,M,t}^k(f)=A_{\nu,M,t,{\rm loc}}^k(f)+A_{\nu,M,t,{\rm glob}}^k(f),
$$
where the local and the global operators are defined as in section \ref{SS2}. The local operator is studied by using vector-valued Calder\'on-Zygmund theory (\!\! \cite{RubRT}) by considering in each case the suitable Banach space. On the other hand, the global operator $\|A_{\nu,M,t,{\rm glob}}^k(f)\|_X$, where $X=L^2((0,\infty ),\frac{dt}{t})$, $X=E_\rho$ and $X=O(\{t_j\}_{j\in \mathbb N})$ are controlled by a positive operator having the $L^p$-boundedness properties that we need. 

\section{Proof of Theorem \ref{Th1.1} for maximal operators}
Since $\{W_t^\nu\}_{t>0}$ is a symmetric diffusion semigroup in the sense of \cite[p. 65]{StLP}, the operator $W_t^\nu $, $t>0$, is contractively regular in $L^p(\mathbb R^n,\mu _\nu)$, $1\leq p\leq \infty$ (see \cite[\S 2]{LeMX1}). Then, according to \cite[Corollary 4.2]{LeMX1}, for every $m\in \mathbb N$, the maximal operator $W_{*,m}^\nu$ defined by
$$
W_{*,m}^\nu(f)=\sup_{t>0}|t^m\partial _t^mW_t^\nu (f)|
$$
is bounded on $L^p(\mathbb R^n,\mu_\nu)$, for every $1<p<\infty$. Moreover, by \!\! \cite [Theorem 2]{LSjW}, $W_{*,0}^\nu$ is bounded from $L^1(\mathbb R^n,\mu_\nu)$ into $L^{1,\infty}(\mathbb R^n,\mu_\nu)$. 

Let $k\in \mathbb N$ and $M>0$. Let us show that, for certain $C>0$, $W_{\nu ,M,*}^k(f)\leq CW_{*,0}^\nu (f)$, for every $f\in C_c^\infty (\mathbb R^n)$. Thus, the $L^p$-boundedness properties for $W_{\nu, M,*}^k$ are obtained from the corresponding ones for $W_{*,0}^\nu$.

First, we observe that, for each $t>0$ and $x\in \mathbb R^n$,
\begin{align*}
    (I-t\Delta_\nu )^{-M}f(x)&=t^{-M}\Big(\frac{1}{t}I-\Delta _\nu \Big)^{-M}(f)(x)=\frac{t^{-M}}{\Gamma (M)}\int_0^\infty e^{-(\frac{1}{t}I-\Delta _\nu )u}f(x)u^{M-1}du\\
    &=\frac{t^{-M}}{\Gamma (M)}\int_0^\infty e^{-u/t}W_u^\nu (f)(x)u^{M-1}du,
\end{align*}
and
\begin{equation}\label{A_nuMt^k}
A_{\nu,M,t}^k(f)(x)=t^k\partial _t^k(I-t\Delta _\nu)^{-M}(f)(x)=\frac{t^k}{\Gamma (M)}\int_0^\infty \partial _t^k(t^{-M}e^{-u/t})W_u^\nu (f)(x)u^{M-1}du.
\end{equation}

\noindent Then, $|A_{\nu,M,t}^k(f)(x)|\leq CH(t)W_{*,0}^\nu(f)(x)$, $t>0$, $x\in \mathbb R^n$, where
$$
H(t)=\int_0^\infty |t^k\partial _t^k(t^{-M}e^{-u/t})|u^{M-1}du,\quad t>0.
$$

Denote by $\mathcal N_k=\{(m_1,...,m_k)\in \mathbb N^k: m_1+2m_2+\cdots +km_k=k\}$. By using Fa\`a di Bruno formula we get
\begin{align}\label{1.1}
    t^k\partial _t^k(t^{-M}e^{-u/t})&=t^k\sum_{(m_1,\ldots,m_k)\in \mathcal N_k}\frac{k!}{m_1!1!^{m_1}\cdots m_k!k!^{m_k}}\prod_{\ell=1}^k\left(\frac{(-1)^\ell \ell!}{t^{\ell+1}}\right)^{m_\ell}\nonumber\\
    &\quad \times \sum_{j=0}^{m_1+\cdots+m_k}\Big(\substack{m_1+\cdots +m_k\\j}\Big)M(M-1)\cdots (M-j+1)\Big(\frac{1}{t}\Big)^{M-j}(-u)^{m_1+\cdots+m_k-j}e^{-u/t}\nonumber\\
    &=\sum_{(m_1,\ldots,m_k)\in \mathcal N_k}\!\!\!\sum_{j=0}^{m_1+\cdots+m_k}a_{m_1,\ldots,m_k,j}t^{-M}\Big(\frac{u}{t}\Big)^{m_1+\cdots +m_k-j}e^{-u/t}\nonumber\\
    &=:\sum_{(m_1,\ldots,m_k)\in \mathcal N_k}\!\!\!\sum_{j=0}^{m_1+\cdots+m_k}a_{m_1,\ldots,m_k,j}H_{M,m_1,\ldots,m_k,j}(u,t),\quad u,t>0.
\end{align}
Here $a_{m_1,\ldots,m_k,j}\in \mathbb R$, for every $(m_1,\ldots, m_k)\in \mathcal N_k$ and $j\in \mathbb N$, $0\leq j\leq m_1+\cdots+m_k$. It follows that
\begin{equation}\label{1.2}
H(t)\leq C\sum_{(m_1,\ldots,m_k)\in \mathcal N_k}\!\!\!\sum_{j=0}^{m_1+\cdots+m_k}\int_0^\infty e^{-v}v^{m_1+\cdots +m_k-j+M-1}dv=C,\quad t>0,
\end{equation}
from which we deduce that $W_{\nu,M,*}^k(f)(x)\leq CW_{*,0}^\nu (f)$.

\section{Proof of Theorem \ref{Th1.1} for Littlewood-Paley functions, $g_{\nu,M}^k$}

Since $\{W_t^\nu\}_{t>0}$ is a symmetric diffusion semigroup in the sense of \cite{StLP}, $\{W_t^\nu\}_{t>0}$ can be extended to a sector $\Sigma_{\alpha _p}=\{z\in \mathbb C\setminus\{0\}: |{\rm Arg }\,z|<\alpha _p\}$, for certain $0<\alpha _p\leq 1$, and being $\{W_z^\nu\}_{z\in \Sigma_{\alpha_p} }$ a bounded analytic semigroup in $L^p(\mathbb R^n,\mu_\nu)$, $1<p<\infty$ (\!\! \cite[Theorem 1, p. 67]{StLP}). Then, $-\Delta_\nu$ is a sectorial operator in $L^p(\mathbb R^n,\mu_\nu)$, $1<p<\infty$, ($\!\!$\cite[Theorem 5.2, \S 2.5]{Pa}). In addition, the range of $\Delta_\nu$ contains the set $\prod_{j=1}^nC_c^\infty (\mathbb R)$, where $C_c^\infty (\mathbb R)$ denotes the smooth and compactly supported functions in $\mathbb R$. Then, the range of $\Delta _\nu$ is dense in $L^p(\mathbb R^n,\mu_\nu)$, for every $1<p<\infty$.

Let $k\in \mathbb N$, $k\geq 1$ and $M>0$. We consider the function
$$
F_{k,M}(z)=(-1)^k\frac{\Gamma (M+k)}{\Gamma (M)}\frac{z^k}{(1+z)^{M+k}},\quad z\in \mathbb C\setminus \{-1\}.
$$
We have that
$$
|F_{k,M}(z)|\leq C\frac{|z|^k}{(1+|z|)^k},\quad z\in \Sigma_\theta,\,0<\theta <\alpha _p.
$$
Then, according to \cite[Theorem 6.6]{CDMY} (see also, \cite[Proposition 2.1]{LeMX1}), for every $1<p<\infty$, there exists $C>0$ such that
\begin{equation}\label{1.0}
\Big\|\Big(\int_0^\infty |F_{k,M}(-t\Delta _\nu )f|^2\frac{dt}{t}\Big)^{1/2}\Big\|_{L^p(\mathbb R^n,\mu_\nu)}\leq C\|f\|_{L^p(\mathbb R^n,\mu_\nu)},\quad f\in L^p(\mathbb R^n,\mu_\nu).    
\end{equation}
Since $A_{\nu,M,t}^k=F_{k,M}(-t\Delta _\nu)$, the Littlewood-Paley function $g_{k,M}^\nu$ is bounded on $L^p(\mathbb R^n,\mu_\nu)$, $1<p<\infty$.

Next we are going to show that $g_{\nu,M}^k$ is bounded from $L^1(\mathbb R^n,\mu_\nu)$ into $L^{1,\infty}(\mathbb R^n,\mu_\nu)$. In order to establish this property we see $g_{\nu,M}^k$ as a $L^2((0,\infty ),\frac{dt}{t})$-valued operator.

Let $N\in \mathbb N$, $N\geq 1$, and $f\in C_c^\infty (\mathbb R^n)$, the space of smooth functions with compact support in $\mathbb R^n$. Consider the function $G_{N,f}:\mathbb R^n\longrightarrow \mathbb B_N:=L^2((1/N,N),\frac{dt}{t})$ given by 
$$
[G_{N,f}(x)](t)=A_{\nu,M,t}^k (f)(x), \quad x\in \mathbb R^n, \,t\in (1/N,N). 
$$
The function $G_{N,f}$ is strongly measurable. Indeed, since $\mathbb B_N$ is a separable Banach space, according to Pettis theorem, it is sufficient to prove that $G_{N,f}$ is weakly measurable. Let $h\in \mathbb B_N$ and let $G_{N,f,h}$ the function defined on $\mathbb R^n$ by 
$$
G_{N,f,h}(x)=\int_{1/N}^NA_{\nu,M,t}^k(f)(x)h(t)\frac{dt}{t},\quad x\in \mathbb R^n.
$$
This function is continuous in $\mathbb R^n$. Indeed, let $x_0\in \mathbb R^n$. 
By taking into account \cite[(1.8)]{LSj4} and that $f\in C_c^\infty (\mathbb R^n)$ we have that
$$
|W_u^\nu (f)(x)-W_u^\nu (f)(x_0)|\leq Ce^{-u}|x-x_0|\int_{\mathbb R^n}|f(y)|d\mu _\nu (y)\leq Ce^{-u}|x-x_0|, \quad |x-x_0|<1\mbox{ and }u>0.
$$
Then, when $|x-x_0|<1$, according to \eqref{A_nuMt^k}, \eqref{1.1} and \eqref{1.2} we conclude that
\begin{align*}
    |G_{N,f,h}(x)-G_{N,f,h}(x_0)|\leq C|x-x_0|\int_{1/N}^N|h(t)|\int_0^\infty |t^k\partial _t^k(t^{-M}e^{-u/t})|u^{M-1}du\frac{dt}{t}\leq C|x-x_0|,
    \end{align*} 
and the continuity of $G_{N,f,h}$ on $x_0$ is proved. 

We now define the operator $G_N$ by $G_N(f)=G_{N,f}$, $f\in C_c^\infty (\mathbb R^n)$, and decompose $G_N$ as follows
$G_N(f)=G_{N,{\rm loc}}(f)+G_{N,{\rm glob}}(f)$, where the local and global parts are defined as in section \ref{SS2}.

We study $G_{N,{\rm loc}}$. Consider the function
\begin{equation}\label{K}
K_{\nu,M,t}^k (x,y)=\frac{e^{2y_1}}{\Gamma (M)}\varphi (x,y)t^k\partial _t^k\Big[t^{-M}\int_0^\infty e^{-u/t}W_{u}^\nu (x,y)u^{M-1}du\Big],\quad x,y\in \mathbb R^n\mbox{ and }t>0.
\end{equation}
We have that
\begin{align}\label{A1}
K_{\nu,M,t}^k (x,y)&=\frac{e^{2y_1}}{\Gamma (M)}\varphi (x,y)t^k\partial _t^k\Big[\int_0^\infty e^{-v}W_{tv}^\nu (x,y)v^{M-1}dv\Big]\nonumber\\
&=\frac{e^{2y_1}}{\Gamma (M)}\varphi (x,y)t^k\partial _t^{k-1}\Big[\int_0^\infty e^{-v}\partial _uW_u^\nu (x,y)_{|u=tv}v^Mdv\Big]\nonumber\\
&=\frac{e^{2y_1}}{\Gamma (M)}\varphi (x,y)t^k\partial _t^{k-1}\Big[t^{-M-1}\int_0^\infty e^{-u/t}\partial _uW_u^\nu (x,y)u^Mdu\Big]\nonumber\\
&=\frac{e^{2y_1}}{\Gamma (M)}\varphi (x,y)\int_0^\infty t^k\partial _t^{k-1}[t^{-M-1}e^{-u/t}]\partial _uW_u^\nu (x,y)u^Mdu,\quad x,y\in \mathbb R^n\mbox{ and }t>0.
\end{align}
According to \eqref{1.1} we get, for every $x,y\in \mathbb R^n$ and $t>0$
\begin{align}\label{1.3}
K_{\nu,M,t}^k (x,y)&=e^{2y_1}\varphi (x,y)\sum_{(m_1,\ldots,m_{k-1})\in \mathcal N_{k-1}}\sum_{j=0}^{m_1+\cdots+m_{k-1}}a_{m_1,\ldots,m_{k-1},j}\nonumber\\
&\quad \times \int_0^\infty H_{M+1,m_1,\ldots,m_{k-1},j}(u,t)\partial_uW_u^\nu (x,y)u^Mdu\nonumber\\
&=\sum_{(m_1,\ldots,m_{k-1})\in \mathcal N_{k-1}}\sum_{j=0}^{m_1+\cdots+m_{k-1}}a_{m_1,\ldots,m_{k-1},j}\varphi(x,y)\mathcal S_{m_1+m_2+\cdots m_{k-1}+M-j}(x,y,t),
\end{align}
where, for every $r>0$,
\begin{equation}\label{Sl}
\mathcal S_r (x,y,t)=e^{2y_1}\int_0^\infty \Big(\frac{u}{t}\Big)^r e^{-u/t}\partial _uW_u^\nu (x,y)du,\quad x,y\in \mathbb R^n\mbox{ and }t>0.
\end{equation}

Let $r>0$. Let us see that for each $x,y\in \mathbb R^n$, $x\not=y$, the following two properties hold:

(a) $\displaystyle \|\varphi(x,y)\mathcal S_r (x,y,\cdot )\|_{L^2((0,\infty ),\frac{dt}{t})}\leq \frac{C}{|x-y|^n}$. 

(b) $\displaystyle \| |\nabla_x[\varphi(x,y)\mathcal S_r (x,y,\cdot )]|\|_{L^2((0,\infty ),\frac{dt}{t})}+\| |\nabla_y[\varphi(x,y)\mathcal S_r (x,y,\cdot )]|\|_{L^2((0,\infty ),\frac{dt}{t})}\leq \frac{C}{|x-y|^{n+1}}$. 
\vspace{0.2cm}

By using Minkowski's inequality we obtain, for every $x,y\in \mathbb R^n$, $x\not=y$,
\begin{align*}
\|\varphi(x,y)\mathcal S_r(x,y,\cdot )\|_{L^2((0,\infty ),\frac{dt}{t})}&\leq \|\mathcal S_r(x,y,\cdot )\|_{L^2((0,\infty ),\frac{dt}{t})}\\
&\leq e^{2y_1}\int_0^\infty u^r |\partial_uW_u^\nu (x,y)|\Big(\int_0^\infty \Big(\frac{e^{-u/t}}{t^r}\Big)^2\frac{dt}{t}\Big)^{1/2}du\\
&\leq C\int_0^\infty |\partial _u[e^{2y_1}W_u^\nu (x,y)]|du,
\end{align*}
and in a similar way and considering the properties of the function $\varphi$, 
\begin{align*}
\|\partial _{x_j}[\varphi(x,y)\mathcal S_r (x,y,\cdot )]\|_{L^2((0,\infty ),\frac{dt}{t})}+\|\partial_{y_j}[\varphi(x,y)\mathcal S_r (x,y,\cdot )]\|_{L^2((0,\infty ),\frac{dt}{t})}&\\
&\hspace{-10cm}\leq \big(|\partial _{x_j}\varphi(x,y)|+|\partial_{y_j}\varphi(x,y)|\big)\|\mathcal S_r (x,y,\cdot )]\|_{L^2((0,\infty ),\frac{dt}{t})}\\
&\hspace{-10cm} \quad +|\varphi(x,y)|\big(\|\partial _{x_j}\mathcal S_r (x,y,\cdot )\|_{L^2((0,\infty ),\frac{dt}{t})}+\|\partial_{y_j}\mathcal S_r (x,y,\cdot )\|_{L^2((0,\infty ),\frac{dt}{t})}\big)\\
&\hspace{-10cm}\leq C \left(\frac{1}{|x-y|}\int_0^\infty |\partial_u[e^{2y_1}W_u^\nu (x,y)]|du\right.\\
&\hspace{-10cm}\quad +\left.\int_0^\infty \Big(|\partial _u[\partial_{x_j}(e^{2y_1}W_u^\nu (x,y))]|+|\partial _u[\partial_{y_j}(e^{2y_1}W_u^\nu (x,y))]|\Big)du\right),\quad j=1,\ldots, n.
\end{align*}
For each $x,y\in \mathbb R^n$ and $u>0$,
\begin{equation}\label{eW}
e^{2y_1}W_u^\nu (x,y)=\frac{1}{(4\pi u)^{n/2}}e^{y_1-x_1}e^{-u}e^{-\frac{|x-y|^2}{4u}},
\end{equation}
and  
\begin{align}\label{1.4}
    \partial _u[e^{2y_1}W_u^\nu (x,y)]&=e^{2y_1}W_u^\nu (x,y)\left(-\frac{n}{2u}-1+\frac{|x-y|^2}{4u^2}\right)\nonumber\\
    &=e^{2y_1}W_u^\nu (x,y)\left(\frac{|x-y|^2-2nu-4u^2}{4u^2}\right).
\end{align}
Moreover, straightforward manipulations give, for every $ x,y\in \mathbb R^n$ and $u>0$,
\begin{equation}\label{derivx1}
\partial_{y_1}(e^{2y_1}W_u^\nu (x,y))=-\partial_{x_1}(e^{2y_1}W_u^\nu (x,y))=e^{2y_1}W_u^\nu (x,y)\Big(1+\frac{x_1-y_1}{2u}\Big),
\end{equation}
and, for every $j=2,\ldots,n$,
\begin{equation}\label{derivxj}
\partial_{y_j}(e^{2y_1}W_u^\nu (x,y))=-\partial_{x_j}(e^{2y_1}W_u^\nu (x,y))=e^{2y_1}W_u^\nu (x,y)\frac{x_j-y_j}{2u},\quad x,y\in \mathbb R^n\mbox{ and }u>0.
\end{equation}
By taking into account \eqref{1.4} we obtain, for $x,y\in \mathbb R^n$ and $u>0$,
\begin{align}\label{paruparx1}
\partial _u[\partial_{y_1}(e^{2y_1}W_u^\nu (x,y))]&=-\partial _u[\partial_{x_1}(e^{2y_1}W_u^\nu (x,y))]\nonumber\\
&=e^{2y_1}W_u^\nu (x,y)\left(\Big(\frac{|x-y|^2-2nu-4u^2}{4u^2}\Big)\Big(1+\frac{x_1-y_1}{2u}\Big)-\frac{x_1-y_1}{2u^2}\right),
\end{align}
and, when $j=2,\ldots,n$,
\begin{align}\label{paruparxj}
 \partial _u[\partial_{y_j}(e^{2y_1}W_u^\nu (x,y))]&=-\partial _u[\partial_{x_j}(e^{2y_1}W_u^\nu (x,y))\nonumber\\
 &=e^{2y_1}W_u^\nu (x,y)\left(\Big(\frac{|x-y|^2-2nu-4u^2}{4u^2}\Big)\frac{x_j-y_j}{2u}-\frac{x_j-y_j}{2u^2}\right).
\end{align}

By \eqref{1.4} (respectively, \eqref{paruparx1} and \eqref{paruparxj}) we deduce that for each $x,y\in \mathbb R^n$, the sign of $\partial _uW_u^\nu (x,y)$, $u>0$, (respectively, $\partial _u[\partial_{y_j}(e^{2y_1}W_u^\nu (x,y))]$ and $\partial _u[\partial_{x_j}(e^{2y_1}W_u^\nu (x,y))]$, $j=1,\ldots,n$) changes at most two times (respectively, three times). 

It follows that
$$
\|\varphi(x,y)\mathcal S_r (x,y,\cdot )\|_{L^2((0,\infty ),\frac{dt}{t})}\leq C\sup_{u>0}[e^{2y_1}W_u^\nu (x,y)],\quad x,y\in \mathbb R^n,
$$
and
\begin{align*}
    \sum_{j=1}^n\Big(\|\partial _{x_j}[\varphi(x,y)\mathcal S_r (x,y,\cdot )]\|_{L^2((0,\infty ),\frac{dt}{t})}+\|\partial_{y_j}[\varphi(x,y)\mathcal S_r (x,y,\cdot )]\|_{L^2((0,\infty ),\frac{dt}{t})}\Big)\\
&\hspace{-10cm}\leq C\left(\frac{1}{|x-y|}\sup_{u>0}[e^{2y_1}W_u^\nu (x,y)]\right.\\
    &\hspace{-10cm}\quad + \left.\sum_{j=1}^n\Big(\sup_{u>0}|\partial _{x_j}(e^{2y_1}W_u^\nu (x,y))|+\sup_{u>0}|\partial _{y_j}(e^{2y_1}W_u^\nu (x,y))|\Big)\right),\quad x,y\in \mathbb R^n,\,x\not=y.
\end{align*}
Now, by taking into account \eqref{eW}, \eqref{derivx1} and \eqref{derivxj}, we have that, for each $x,y\in \mathbb R^n$ and $u>0$,
\begin{equation}\label{eyW}
e^{2y_1}W_u^\nu (x,y)\leq \frac{1}{(4\pi u)^{n/2}}e^{|x-y|-u-\frac{|x-y|^2}{4u}}=\frac{C}{u^{n/2}}e^{-\frac{(2u-|x-y|)^2}{4u}},
\end{equation}
and, for every $j=1,\ldots,n$,
$$
|\partial _{x_j}(e^{2y_1}W_u^\nu (x,y))|+|\partial _{y_j}(e^{2y_1}W_u^\nu (x,y))|\leq \frac{C}{u^{n/2}}e^{-\frac{(2u-|x-y|)^2}{4u}}\Big(1+\frac{|x-y|}{u}\Big).
$$
Suppose that $x,y\in \mathbb R^n$ and $0<|x-y|<2$. We distingue three cases:

(i) If $u>|x-y|$, then $2u-|x-y|>|x-y|$ and we can write
$$
e^{2y_1}W_u^\nu (x,y)\leq \frac{C}{u^{n/2}}e^{-\frac{|x-y|^2}{4u}}\leq \frac{C}{|x-y|^n},
$$
and also, for $j=1,\ldots,n$,
\begin{align*}
    |\partial _{x_j}(e^{2y_1}W_u^\nu (x,y))|+|\partial _{y_j}(e^{2y_1}W_u^\nu (x,y))|&\leq \frac{C}{u^{n/2}}e^{-\frac{|x-y|^2}{4u}}\Big(1+\frac{|x-y|}{u}\Big)\leq \frac{C}{u^{n/2}}e^{-\frac{|x-y|^2}{4u}}\\
    &\leq \frac{C}{|x-y|^n}\leq \frac{C}{|x-y|^{n+1}}.
\end{align*}

(ii) If $0<u<|x-y|/4$, then $|x-y|-2u>|x-y|/2$ and thus
$$
e^{2y_1}W_u^\nu (x,y)\leq \frac{C}{u^{n/2}}e^{-\frac{|x-y|^2}{16u}}\leq \frac{C}{|x-y|^n},
$$
and for $j=1,\ldots,n$,
\begin{align*}
    |\partial _{x_j}(e^{2y_1}W_u^\nu (x,y))|+|\partial _{y_j}(e^{2y_1}W_u^\nu (x,y))|&\leq \frac{C}{u^{n/2}}e^{-\frac{|x-y|^2}{16u}}\Big(1+\frac{|x-y|}{u}\Big)\leq C\frac{|x-y|}{u^{n/2+1}}e^{-\frac{|x-y|^2}{16u}}\\
    &\leq \frac{C}{|x-y|^{n+1}}.
\end{align*}

(iii) If $|x-y|/4\leq u\leq |x-y|$, then
$$
e^{2y_1}W_u^\nu (x,y)\leq \frac{C}{u^{n/2}}\leq \frac{C}{|x-y|^{n/2}}\leq \frac{C}{|x-y|^n},
$$
and also, for $j=1,\ldots,n$,
$$
|\partial _{x_j}(e^{2y_1}W_u^\nu (x,y))|+|\partial _{y_j}(e^{2y_1}W_u^\nu (x,y))|\leq \frac{C}{u^{n/2}}\leq \frac{C}{|x-y|^{n/2}}\leq \frac{C}{|x-y|^{n+1}}.
$$
From these estimates (a) and (b) are established and, according to \eqref{1.3} it follows that, for each $x,y\in \mathbb R^n$, $x\not=y$,

(c) $\displaystyle \|K (x,y,\cdot )\|_{L^2((0,\infty ),\frac{dt}{t})}\leq \frac{C}{|x-y|^n}$. 

(d) $\displaystyle \| |\nabla_xK(x,y,\cdot)|\|_{L^2((0,\infty ),\frac{dt}{t})}+\| |\nabla_yK(x,y,\cdot )|\|_{L^2((0,\infty ),\frac{dt}{t})}\leq \frac{C}{|x-y|^{n+1}}$. 
\vspace{0.2cm}

Let $f\in C_c^\infty (\mathbb R^n)$ and $x\not \in {\rm supp} (f)$. We define the functions
$$
H_1(t)=A^k_{\nu ,M,t}(f\varphi(x,\cdot))(x),\quad t\in \Big(\frac{1}{N},N\Big),
$$
and 
$$
H_2(t)=\left[\int_{\mathbb R^n} f(y)K_{\nu,M,\cdot}^k (x,y)dy\right](t),\quad t\in \Big(\frac{1}{N},N\Big),
$$
where the integral is understood in the $\mathbb B_N$-Bochner sense. We are going to see that $H_1=H_2$ in $\mathbb B_N$. Let $h\in \mathbb B_N$. By using properties of Bochner integral we obtain
\begin{align*}
    \int_{1/N}^NH_2(t)h(t)\frac{dt}{t}&=\int_{\mathbb R^n}f(y)\int_{1/N}^N h(t)K_{\nu,M,t}^k (x,y)\frac{dt}{t}dy\\
    &=\int_{1/N}^Nh(t)\int_{\mathbb R^n}f(y)K_{\nu,M,t}^k (x,y)dy\frac{dt}{t}.
\end{align*}
The interchange of the order of integration is justified because, according to (c)  we have that
$$
\int_{1/N}^N|h(t)||K_{\nu,M,t}^k (x,y)|\frac{dt}{t}\leq \|h\|_{L^2((\frac{1}{N},N),\frac{dt}{t})}\|K_{\nu,M,\cdot}^k(x,y)\|_{L^2((\frac{1}{N},N),\frac{dt}{t})} \leq \frac{C}{|x-y|^n},\quad x,y\in \mathbb R^n,\,x\not=y.
$$
Consider the operator $\mathbb G_{N,{\rm loc}}: C_c^\infty (\mathbb R^n)\subseteq L^2(\mathbb R^n,\mu_\nu)\longrightarrow L^2_{\mathbb B_N}(\mathbb R^n,\mu_\nu)$ given by $f\rightarrow G_{N,{\rm loc}}(f)$. According to \eqref{1.0} we obtain
$$
\|G_{N,{\rm loc}}(f)\|_{L^2_{\mathbb B_N}(\mathbb R^n,\mu_\nu)}\leq C\|f\|_{L^2(\mathbb R^n,\mu_\nu )},\quad f\in C_c^\infty (\mathbb R^n),
$$
where $C>0$ does not depend on $N$.

By Proposition \ref{Prop2.3} we have that
$$
\|G_{N,{\rm loc}}(f)\|_{L^2_{\mathbb B_N}(\mathbb R^n,dx)}\leq C\|f\|_{L^2(\mathbb R^n,dx)},\quad f\in C_c^\infty (\mathbb R^n),
$$
with $C>0$ independent of $N$. Then, $G_{N,{\rm loc}}$ can be extended to $L^2(\mathbb R^n,dx)$ as a bounded operator from $L^2(\mathbb R^n,dx)$ into $L^2_{\mathbb B _N}(\mathbb R^n,dx)$ and
$$
\sup_{N\in \mathbb N}\|G_{N,{\rm loc}}\|_{L^2(\mathbb R^n,dx)\longrightarrow L^2_{\mathbb B_N}(\mathbb R^n,dx)}<\infty.
$$
Thus, for every $f\in C_c^\infty (\mathbb R^n)$,
$$
G_{N,{\rm loc}}(f)(x)=\int_{\mathbb R^n}f(y)K_{\nu,M,\cdot}^k (x,y) dy,\quad x\not\in {\rm supp}(f),
$$
where the integral is understood in the $\mathbb B_N$-Bochner sense.

According to (c) and (d) we get, for each $x,y\in \mathbb R^n$, $x\not=y$,
\begin{equation}\label{1.5a}
\sup_{N\in \mathbb N}\|K_{\nu,M,\cdot}^k (x,y)\|_{\mathbb B_N}\leq \frac{C}{|x-y|^n},
\end{equation}
and 
\begin{equation}\label{1.5b}
\sup_{N\in \mathbb N}\Big(\| |\nabla_x K_{\nu,M,\cdot}^k(x,y)|\|_{\mathbb B_N}+\||\nabla_y K_{\nu,M,\cdot}^k(x,y)|\|_{\mathbb B_N}\Big)\leq \frac{C}{|x-y|^{n+1}}.
\end{equation}
By using vector-valued Calder\'on-Zygmund singular integral theory we conclude that, for every $N\in \mathbb N$, the operator $G_{N,{\rm loc}}$ can be extended to $L^1(\mathbb R^n,dx)$ as a bounded operator from $L^1(\mathbb R^n,dx)$ into $L^{1,\infty}_{\mathbb B_N}(\mathbb R^n,dx)$ that we continue naming $G_{N,{\rm loc}}$ satisfying that
$$
\sup_{N\in \mathbb N}\|G_{N,{\rm loc}}\|_{L^1(\mathbb R^n,dx)\longrightarrow L^{1,\infty}_{\mathbb B_N}(\mathbb R^n,dx)}<\infty.
$$

According to Proposition \ref{Prop2.3}, for every $N\in \mathbb N$, $G_{N,{\rm loc}}$ can be extended to $L^1(\mathbb R^n,\mu_\nu)$ as a bounded operator from $L^1(\mathbb R^n,\mu_\nu)$ into $L^{1,\infty}_{\mathbb B_N}(\mathbb R^n,\mu_\nu)$. This extension, that we continue denoting by $G_{N,{\rm loc}}$, satisfies
$$
\sup_{N\in \mathbb N}\|G_{N,{\rm loc}}\|_{L^1(\mathbb R^n,\mu_\nu)\longrightarrow L^{1,\infty}_{\mathbb B_N}
(\mathbb R^n,\mu_\nu)}<\infty.
$$
Then, there exists $C>0$ such that, for every $f\in C_c^\infty (\mathbb R^n)$,
\begin{align*}
\lim_{N\rightarrow \infty}\mu_\nu \Big(\big\{x\in \mathbb R^n: \|G_{N,{\rm loc}}(f)(x)\|_{\mathbb B_N}>\lambda \big\}\Big)&\\
&\hspace{-3cm}=\mu_\nu  \Big(\big\{x\in \mathbb R^n:g_{\nu,M,{\rm loc}}^k(f)(x)>\lambda \big\}\Big)\leq \frac{C}{\lambda}\|f\|_{L^1(\mathbb R^n,\mu_\nu )},\quad \lambda >0.
\end{align*}
Suppose that $f\in L^1(\mathbb R^n,\mu_\nu)$ and choose a sequence $(f_j)_{j\in \mathbb N}\subset C_c^\infty (\mathbb R^n)$ such that $f_j\longrightarrow f$, as $j\rightarrow \infty$, in $L^1(\mathbb R^n,\mu_\nu)$. 

Let $N\in \mathbb N$, $N\geq 1$. We have that $G_{N,{\rm loc}}(f-f_\ell)\longrightarrow 0$, as $\ell\rightarrow \infty$, in $L^{1,\infty}_{\mathbb B_N}
(\mathbb R^n,\mu_\nu)$. There exists an increasing function $\phi:\mathbb N\longrightarrow \mathbb N$ such that $\|G_{N,{\rm loc}}(f-f_{\phi (\ell )})(x)\|_{\mathbb B_N}\longrightarrow 0$, as $\ell\rightarrow \infty$, for almost all $x\in \mathbb R^n$.

By taking into account \eqref{A_nuMt^k} and \eqref{1.1} it follows that
\begin{align*}
A^k_{\nu,M,t}((f-f_\ell )\varphi(x,\cdot))(x)&=\frac{1}{\Gamma (M)}t^k\partial _t^k\Big(t^{-M}\int_0^\infty e^{-u/t}W_u^\nu ((f-f_\ell)\varphi(x,\cdot))(x)u^{M-1}du\Big)\\
&\hspace{-2.5cm}=\sum_{(m_1,\ldots,m_k)\in \mathcal N_k}\sum_{j=0}^{m_1+\cdots+m_k}a_{m_1,\ldots,m_k,j}\\
&\hspace{-2.5cm}\quad \times \int_0^\infty \frac{u^{m_1+\cdots +m_k-j+M-1}}{t^{m_1+\cdots+m_k+M-j}}e^{-u/t}W_u^\nu ((f-f_\ell)\varphi (x,\cdot))(x)du,\quad x\in \mathbb R^n,\,\ell \in \mathbb N\mbox{ and }t>0.
\end{align*}
Since $W_u^\nu$, $u>0$, is contractive in $L^1(\mathbb R^n,\mu_\nu)$ we obtain
$$
\Big\|\|A^k_{\nu,M,\cdot}((f-f_\ell )\varphi (x,\cdot))(x)\|_{\mathbb B_N}\Big\|_{L^1(\mathbb R^n,\mu_\nu)}\leq C\|f-f_\ell\|_{L^1(\mathbb R^n,\mu_\nu)},\quad \ell \in \mathbb N.
$$
Thus, there exists an increasing function $\theta:\mathbb N\longrightarrow \phi(\mathbb N)$ such that, for almost all $x\in \mathbb R^n$,
$$
\lim_{j\rightarrow \infty}\|A^k_{\nu,M,\cdot}(f_{\theta (j)}\varphi (x,\cdot))(x)\|_{\mathbb B_N}=\|A^k_{\nu,M,\cdot}(f\varphi (x,\cdot))(x)\|_{\mathbb B_N}.
$$
We deduce that
$$
\|A^k_{\nu,M,\cdot}(f\varphi (x,\cdot))(x)\|_{\mathbb B_N}=\|G_{N,{\rm loc}}(f)(x)\|_{\mathbb B_N},\quad \mbox{ a.e. }x\in \mathbb R^n.
$$
A diagonal argument and letting $N\rightarrow \infty$ as above we conclude that
$$
\mu_\nu  \Big(\big\{x\in \mathbb R^n:g_{\nu,M,{\rm loc}}^k (f)(x)>\lambda \big\}\Big)\leq \frac{C}{\lambda}\|f\|_{L^1(\mathbb R^n,\mu_\nu )},\quad \lambda >0,
$$
where $C$ does not depend on $f$.

Thus, the study of the local operator is finished.

We now consider the global operator $G_{N,{\rm glob}}$. We define
\begin{equation}\label{L}
L_{\nu,M,t}^k(x,y)=\frac{1-\varphi (x,y)}{\Gamma (M)}t^k\partial _t^k\Big[t^{-M}\int_0^\infty e^{-u/t}W_{u}^\nu (x,y)u^{M-1}du\Big],\quad x,y\in \mathbb R^n \mbox{ and }t>0.
\end{equation}
By proceeding as in the local case we obtain
$$
\|L_{\nu,M,\cdot}^k(x,y)\|_{L^2((0,\infty),\frac{dt}{t})}\leq C\sup_{u>0}W_u^\nu (x,y),\quad x,y\in \mathbb R^n.
$$
According to \cite[Lemma 6]{LSjW} we have that
$$
\|L_{\nu,M,\cdot}^k (x,y)\|_{L^2((0,\infty ),\frac{dt}{t})}\leq Ce^{-(x_1+y_1)}\frac{e^{-|x-y|}}{|x-y|^{n/2}}\mathcal X_{\{|x-y|\geq 1\}}(x,y),\quad x,y\in \mathbb R^n.
$$
By using \cite[Proposition 4]{LSjW} and by letting $N\rightarrow \infty$ we deduce that $g_{\nu,M,{\rm glob}}^k$ is bounded from $L^1(\mathbb R^n,\mu_\nu)$ into $L^{1,\infty }(\mathbb R^n,\mu_\nu)$. 

Thus the proof is finished.

\section{Proof of Theorem \ref{Th1.1} for Littlewood-Paley functions $G_{M,\nu}^{k,\ell}$}
Let $f\in C_c^\infty (\mathbb R^n)$. We have that
\begin{align*}
t^{k+\ell /2}\partial _t^k D^{(\ell)}(I-t\Delta_\nu )^{-M}(f)(x)&=\frac{t^{k+\ell/2}}{\Gamma (M)}\int_0^\infty \partial _t^k \Big(e^{-u/t}\big(\frac{u}{t}\big)^{M}\Big)D^{(\ell)}W_u^\nu (f)(x)\frac{du}{u}\\
&\hspace{-3cm}=\frac{1}{\Gamma(M)}\int_0^\infty \big[s^{k+\ell/2}\partial _s^k(e^{-1/s}s^{-M})\big]_{|s=\frac{t}{u}}u^{\ell /2}D^{(\ell)}W_u^\nu (f)(x)\frac{du}{u}\\
&\hspace{-3cm}=\frac{1}{\Gamma (M)}\int_0^\infty s^{k+\ell/2}\partial _s^k(e^{-1/s}s^{-M})\Big(\frac{t}{s}\Big)^{\ell/2}D^{(\ell)}W_{t/s}^\nu (f)(x)\frac{ds}{s},\quad x\in \mathbb R^n\mbox{ and }t>0.
\end{align*}
By using Minkowski inequality and \eqref{1.1}, and taking into account that $\ell <2M$ we obtain
\begin{align*}
    G_{\nu,M}^{k,\ell}(f)(x)&\leq \int_0^\infty |s^{k+\ell/2}\partial _s^k(s^{-M}e^{-1/s})|\Big\|\Big(\frac{t}{s}\Big)^{\ell/2}D^{(\ell)}W_{t/s}^\nu (f)(x)\Big\|_{L^2((0,\infty ),\frac{dt}{t})}\frac{ds}{s}\\
    &\leq \|t^{\ell/2}D^{(\ell)}W_t^\nu (f)(x)\|_{L^2((0,\infty ),\frac{dt}{t})}\int_0^\infty |s^{k+\ell/2}\partial _s^k(s^{-M}e^{-1/s})|\frac{ds}{s}\\
    &\leq C\|t^{\ell/2}D^{(\ell)}W_t^\nu (f)(x)\|_{L^2((0,\infty ),\frac{dt}{t})}.
\end{align*}
Then, it follows that the Littlewood-Paley function $G_{\nu,M}^{k,\ell} $ defines a bounded operator on $L^p(\mathbb R^d,\mu_\nu)$, for every $1<p<\infty$ (see \cite[p. 1280]{LSj4}). 
On the other hand, 
\begin{align*}
   G_{\nu,M}^{k,\ell}(f)(x)&\leq C\int_0^\infty \big\|[s^{k+\ell/2}\partial _s^k (s^{-M}e^{-1/s})]_{|s=\frac{t}{u}}\big\|_{L^2((0,\infty ),\frac{dt}{t})}u^{\ell/2}|D^{(\ell)}W_u^\nu (f)(x)|\frac{du}{u}\\
    &\leq C\int_{\mathbb R^n}|f(y)|\int_0^\infty u^{\ell/2}|D^{(\ell)}W_u^\nu (x,y)|\frac{du}{u}d\mu_\nu(y),\quad x\in \mathbb R^n.
\end{align*}
By proceeding as in the proof of \cite[Theorem 1.2, (ii)]{LSj4} we deduce that $G_{\nu,M}^{k,\ell}$ defines a bounded operator from $L^1(\mathbb R^n,\mu _\nu)$ into $L^{1,\infty }(\mathbb R^n,\mu_\nu )$, provided that $\mathfrak q\leq 2$. Note that when $\mathfrak q\leq 1$ this assert can be deduced from \cite[Theorem 1.2, (i)]{LSj4}. In addition, also by \cite[Theorem 1.2, (i)]{LSj4} we get that when $\mathfrak q>1$, there exists $C>0$ such that, for every $f\in L(1+\ln^+L)^{\mathfrak q/2-3/4}$,
$$
\mu_\nu (\{x\in \mathbb R^n: G_{\nu, M,\ell }^k (f)(x)>\lambda \})\leq C\int_{\mathbb R^n}\frac{|f(x)|}{\lambda }\Big(1+\ln^+\frac{|f(x)|}{\lambda}\Big)^{\mathfrak q/2-3/4}d\mu_\nu (x).
$$

\section{Proof of Theorem \ref{Th1.1} for variation operators}\label{Svariation}

Let $f\in C_c^\infty (\mathbb R^n)$. From \eqref{If} we can write
$$
(I-t\Delta _\nu )^{-M}(f)(x)=\frac{1}{\Gamma (M)}\int_0^\infty e^{-v}W_{tv}(f)(x)v^{M-1}dv,\quad x\in \mathbb R^n\mbox{ and }t>0,
$$
and then,
\begin{align}\label{3.1}
A^k_{\nu ,M,t}(f)(x)&=\frac{t^{k}}{\Gamma (M)}\int_0^\infty e^{-v}\partial _t^k(W_{tv}
^\nu (f)(x))v^{M-1}dv=\frac{1}{\Gamma (M)}\int_0^\infty e^{-v}[s^k\partial _s^kW_s^\nu (f)(x)]_{|s=tv}v^{M-1}dv\nonumber\\
&=\frac{t^{-M}}{\Gamma (M)}\int_0^\infty e^{-s/t}s^k\partial _s^kW_s^\nu (f)(x)s^{M-1}ds,\quad x\in \mathbb R^n\mbox{ and }t>0.
\end{align}
The differentiation under the integral sign is justified because when $M>n/2$, for every $\ell \in \mathbb N$, we have (see \eqref{1.1})
\begin{align}\label{3.2}
\int_0^\infty e^{-v}v^{M-1}\int_{\mathbb R^n}e^{y_1-x_1}\Big|\partial _t^\ell \Big(e^{-tv}\frac{e^{-\frac{|x-y|^2}{4tv}}}{(tv)^{n/2}}\Big)\Big||f(y)|dydv\nonumber\\
&\hspace{-7cm} \leq C\int_0^\infty e^{-v}v^{M-1}\int_{\mathbb R^n}e^{y_1-x_1}\sum_{r=0}^\ell e^{-tv}v^{\ell -r}\Big|\partial_t^r\Big(\frac{e^{-\frac{|x-y|^2}{4tv}}}{(tv)^{n/2}}\Big)\Big||f(y)|dydv\nonumber\\
&\hspace{-7cm}\leq C\int_0^\infty e^{-v}v^{M+\ell -1}\int_{\mathbb R^n}e^{y_1-x_1}\sum_{r=0}^\ell e^{-tv}\frac{e^{-c\frac{|x-y|^2}{tv}}}{(tv)^{r+n/2}}|f(y)|dydv\nonumber\\
&\hspace{-7cm} \leq C\sum_{r=0}^\ell \frac{1}{t^{r+n/2}}\int_0^\infty e^{-(t+1)v}v^{M+\ell -r-n/2-1}dv<\infty,\quad x\in \mathbb R^n\mbox{ and }t>0.
\end{align}
According to \cite[Corollary 4.5]{LeMX2}, the $\rho$-variation operator $V_\rho (\{t^k\partial _t^kW_t^\nu \}_{t>0})$ is bounded on $L^p(\mathbb R^d,\mu_\nu)$, for every $1<p<\infty$. Then, by \eqref{3.1} the $\rho$-variation operator $V_\rho (\{A^k_{\nu,M,t}\}_{t>0})$ can be extended to $L^p(\mathbb R^n,\mu_\nu)$ as a bounded operator, for each $1<p<\infty$.

We are going to see that the $\rho$-variation operator $V_\rho (\{A^k_{\nu,M,t}\}_{t>0})$ defines a bounded operator from $L^1(\mathbb R^n,\mu_\nu)$ into $L^{1,\infty }(\mathbb R^n,\mu_\nu)$. Consider a function $\varphi $ as the given in section \ref{SS2} before Proposition \ref{Prop2.2} and define, for every $t>0$, 
$$
W_{t,{\rm loc}}^\nu (f)(x)=W_t^\nu (f\varphi (x,\cdot))(x),\quad x\in \mathbb R^n,
$$
and $W_{t,{\rm glob}}^\nu (f)=W_t^\nu (f)-W_{t,{\rm loc}}^\nu (f)$. 

Let 
$$
(I-t\Delta_\nu)_{\rm loc}^{-M}(f)(x)=\frac{t^{-M}}{\Gamma (M)}\int_0^\infty e^{-u/t}W_{u,{\rm loc}}^\nu (f)(x)u^{M-1}du,\quad x\in \mathbb R^n,
$$
and $(I-t\Delta_\nu)_{\rm glob}^{-M}=(I-t\Delta_\nu)^{-M}-(I-t\Delta_\nu)_{\rm loc}^{-M}$.

We define the following $\rho$-variation operators
$$
V_{\rho,{\rm loc}}(\{A^k_{\nu,M,t}\}_{t>0})=V_\rho (\{t^k\partial _t^k(I-t\Delta_\nu )^{-M}_{\rm loc}\}_{t>0}),
$$
and
$$
V_{\rho,{\rm glob}}(\{A^k_{\nu,M,t}\}_{t>0})=V_\rho (\{t^k\partial _t^k(I-t\Delta_\nu )^{-M}_{\rm glob}\}_{t>0}).
$$
It is clear that
$$
V_\rho(\{A^k_{\nu,M,t}\}_{t>0})\leq V_{\rho,{\rm loc}}(\{A^k_{\nu,M,t}\}_{t>0})+V_{\rho,{\rm glob}}(\{A^k_{\nu,M,t}\}_{t>0}).
$$
We establish first that $V_{\rho,{\rm loc}}(\{A^k_{\nu,M,t}\}_{t>0})$ defines a bounded operator from $L^1(\mathbb R^n,\mu_\nu)$ into $L^{1,\infty }(\mathbb R^n,\mu_\nu)$.

Let $f\in C_c^\infty (\mathbb R^n)$.  By \eqref{3.2} we deduce that, for every $x\in \mathbb R^n$, the function
$$
F_k(t)=A^k_{\nu,M,t}(f)(x),\quad t\in (0,\infty ),
$$
is continuous. Then, for every $x\in \mathbb R^n$,
\begin{align*}
    V_{\rho,{\rm loc}}(\{A^k_{\nu,M,t}\}_{t>0})(f)(x)&\\
    &\hspace{-4cm}=\sup_{\substack{0<t_1<\cdots<t_\ell\\\{t_1,\ldots,t_\ell\}\subset \mathbb Q,\,\ell \in \mathbb N}}\left(\sum_{j=1}^{\ell -1}|t^k\partial _t^k (I-t\Delta_\nu)^ {-M}_{\rm loc}(f)(x)_{|t=t_j}-t^k\partial _t^k (I-t\Delta_\nu)^ {-M}_{\rm loc}(f)(x)_{|t=t_{j+1}}\right)^{1/\rho }.
\end{align*}
Since the set of the finite subsets of $\mathbb Q$ is countable we conclude that the function $V_{\rho,{\rm loc}}(\{A^k_{\nu,M,t}\}_{t>0})(f)$ is measurable in $\mathbb R^n$.

Let $N\in \mathbb N$. We define the operator $V_{\rho,{\rm loc}}^N(\{A^k_{\nu,M,t}\}_{t>0})$ as $V_{\rho,{\rm loc}}(\{A^k_{\nu,M,t}\}_{t>0})$ but taking the supremum on $\{t_1,\ldots,t_\ell\}$, $\ell \in \mathbb N$, such that $1/N<t_1<\cdots <t_\ell <N$. It is clear that
$$
\lim_{N\rightarrow \infty}V_{\rho,{\rm loc}}^N(\{A^k_{\nu,M,t}\}_{t>0})(f)(x)=V_{\rho,{\rm loc}}(\{A^k_{\nu,M,t}\}_{t>0})(f)(x),\quad x\in \mathbb R^n.
$$

Let us consider the space $E_{\rho ,N}$ of all those functions $g:(\frac{1}{N},N)\longrightarrow \mathbb C$ such that
$$
\|g\|_{E_{\rho ,N}}:=\sup_{\frac{1}{N}<t_1<\cdots<t_\ell<N,\,\ell \in \mathbb N}\left(\sum_{j=1}^{\ell -1}|g(t_j)-g(t_{j+1})|^ \rho \right)^{1/\rho}<\infty .
$$
By identifying those functions that differ in a constant in $[1/N,N]$, $(E_{\rho ,N},\|\cdot\|_{E_{\rho,N}})$ is a Banach space. If $g:(0,\infty )\longrightarrow \mathbb C$ is a derivable function and $g'\in L^1((0,\infty ),dt)$ we have that
\begin{equation}\label{3.3}
    \|g\|_{E_{\rho ,N}}=\sup_{\frac{1}{N}<t_1<\cdots<t_\ell<N,\,\ell \in \mathbb N}\left(\sum_{j=1}^{\ell -1}\Big|\int_{t_j}^{t_{j+1}}g'(t)dt\big|^ \rho \right)^{1/\rho}\leq \int_{1/N}^N|g'(t)|dt\leq \int_0^\infty |g'(t)|dt.
\end{equation}
Consider $K_{\nu,M,t}^k (x,y)$, $x,y\in \mathbb R^n$ and $t>0$, as in \eqref{K}. According to \eqref{A1} we have that, for each $x,y\in \mathbb R^n$ and $t>0$, 
\begin{align*}
\partial _tK_{\nu,M,t}^k (x,y)&=\frac{e^{2y_1}}{\Gamma (M)}\varphi (x,y)(kt^{k-1}\partial _t^{k-1}+t^k\partial _t^k)\Big(t^{-M-1}\int_0^\infty e^{-u/t}\partial _uW_u^\nu (x,y)u^Mdu\Big)\nonumber\\
&=\frac{1}{t}\big(kK_{\nu,M,t}^k(x,y)+K_{\nu ,M,t}^{k+1}(x,y)\big),\quad k\geq 1,
\end{align*}
and, when $k=0$, we have that $\partial _tK_{\nu,M,t}^0(x,y)=\frac{1}{t}K_{\nu,M,t}^1(x,y)$. Thus, it follows that
\begin{equation}\label{derivK}
\partial _tK_{\nu,M,t}^k (x,y)=\frac{1}{t}(kK_{\nu,M,y}^k(x,y)+K_{\nu ,M,t}^{k+1}(x,y)),\quad k\in \mathbb N.
\end{equation}
We are going to see that
\begin{equation}\label{3.4}
\|K_{\nu,M,t}^k (x,y)\|_{E_{\rho ,N}}\leq \frac{C}{|x-y|^n},\quad x,y\in \mathbb R^n,\,x\not=y,
\end{equation}
where the constant $C>0$ does not depend on $N$.

We recall that $0\leq \varphi (x,y)\leq 1$, $x,y\in \mathbb R^n$ and that $\varphi(x,y)=0$, when $|x-y|\geq 2$. Then, by using \eqref{3.3} and taking into account \eqref{1.3} it is sufficient to see that, for every $r>0$,
\begin{equation}\label{Sellt}
\int_0^\infty |\mathcal S_r (x,y,t)|\frac{dt}{t}\leq \frac{C}{|x-y|^n},\quad x,y\in \mathbb R^n,\,0<|x-y|<2,
\end{equation}
where $\mathcal S_r (x,y,t)$, $x,y\in \mathbb R^n$ and $t>0$, is given by \eqref{Sl}.

Let $r>0$. We have that
\begin{align*}
    \int_0^\infty |\mathcal S_r (x,y,t)|\frac{dt}{t}&\leq Ce^{2y_1}\int_0^\infty u^r|\partial _uW_u^\nu (x,y)|\int_0^\infty \frac{e^{-u/t}}{t^{r+1}}dtdu\leq Ce^{2y_1}\int_0^\infty |\partial _uW_u^\nu (x,y)|du\\
    &\leq Ce^{2y_1}\sup_{u>0}|W_u^\nu (x,y)|\leq \frac{C}{|x-y|^n},\quad x,y\in \mathbb R^n,\,0<|x-y|<2.
\end{align*}
We have used the estimates established in the proof of the property (a) after \eqref{Sl}.

Now we are going to show that, for every $x,y\in \mathbb R^n$, $x\not=y$,
\begin{equation}\label{CZ2K}
\| |\nabla_x(K_{\nu,M,t}^k (x,y))|\|_{E_{\rho ,N}}+\| |\nabla_y(K_{\nu,M,t}^k (x,y))|\|_{E_{\rho ,N}}\leq \frac{C}{|x-y|^{n+1}},
\end{equation}
with $C>0$ independent of $N$. Again by considering \eqref{1.3} we only need to see that, for $r>0$, there exists $C>0$ independent of $N$ such that, for $x,y\in \mathbb R$, $x\not=y$,
\begin{equation}\label{3.5}
\| |\nabla_x[\varphi(x,y)\mathcal S_r(x,y,\cdot)]|\|_{E_{\rho ,N}}+\| |\nabla_y[\varphi(x,y)\mathcal S_r(x,y,\cdot )]|\|_{E_{\rho ,N}}\leq \frac{C}{|x-y|^{n+1}}.
\end{equation}
Fix $r>0$. For $j=1,\ldots,n$ we have that
\begin{equation}\label{dxjK}
\partial_{x_j}[\varphi (x,y)\mathcal S_r(x,y,t)]=\varphi (x,y)\partial_{x_j}\mathcal S_r(x,y,t)+\mathcal S_r(x,y,t)\partial_{x_j}\varphi (x,y), \quad x,y\in \mathbb R^n,\,t>0,
\end{equation}
and
\begin{equation}\label{dyjK}
\partial_{y_j}[\varphi (x,y)\mathcal S_r(x,y,t)]=\varphi (x,y)\mathcal S_r(x,y,t)+\mathcal S_r(x,y,t)\partial_{y_j}\varphi (x,y), \quad x,y\in \mathbb R^n,\,t>0.
\end{equation}
According to the properties of the function $\varphi$ and \eqref{Sellt}, in order to prove \eqref{3.5} it is sufficient to establish that, for every $j=1,\ldots, n$,
$$
\|\partial_{x_j}\mathcal S_r(x,y,\cdot)\|_{E_{\rho ,N}}+\|\partial_{y_j}\mathcal S_r(x,y,\cdot)\|_{E_{\rho ,N}}\leq \frac{C}{|x-y|^{n+1}},\quad x,y\in \mathbb R^n,\,0<|x-y|<2,
$$
where $C>0$ does not depend on $N$. Taking into account \eqref{3.3} this will be proved by showing that, for each $j=1,\ldots, n$,
\begin{equation}\label{dtdxjdyjS}
    \int_0^\infty \Big(|\partial _t\partial _{x_j}\mathcal S_r (x,y,t)|+|\partial _t\partial _{y_j}\mathcal S_r(x,y,t)|\Big)dt\leq \frac{C}{|x-y|^{n+1}},\quad x,y\in \mathbb R^n,\,0<|x-y|<2,
\end{equation}
with $C>0$ independent of $N$.

Since, for certain $C>0$,
$$
\int_0^\infty \big|\partial _t\big[\big(\frac{u}{t}\big)^r e^{-u/t}\big]\Big|dt\leq C\int_0^\infty \Big(\frac{u^r}{t^{r +1}}+\frac{u^{r +1}}{t^{r+2}}\Big)e^{-u/t}dt=C, \quad u>0,
$$
it follows that
\begin{align*}
    \int_0^\infty \Big(|\partial _t\partial _{x_j}\mathcal S_r (x,y,t)|+|\partial _t\partial _{y_j}\mathcal S_r (x,y,t)|\Big)dt&\\
    &\hspace{-4cm}\leq C\int_0^\infty (|\partial _u\partial_{x_j}(e^{2y_1}W_u^\nu (x,y))|+|\partial _u\partial_{y_j}(e^{2y_1}W_u^\nu (x,y))|)du,\quad x,y\in \mathbb R^n.
\end{align*}
 By following the proof of property (b) after \eqref{1.3} we get \eqref{dtdxjdyjS} with $C>0$ independent of $N$.

Let $f\in C_c^\infty(\mathbb R^n)$. Fix $x\in \mathbb R^n$, define
$$
F_x(y)(t)=K_{\nu,M,t}^k(x,y)f(y),\quad y\in \mathbb R^n\mbox{ and }t>0,
$$
and consider the function $G:\mathbb R^n\longrightarrow E_{\rho ,N}$ given by $G(y)=F_x(y)$, $y\in \mathbb R^n$. Note that $G$ is a continuous function. Indeed, let $y_0\in \mathbb R^n$. According to \eqref{3.3} we have that
$$
\|F_x(y)-F_x(y_0)\|_{E_{\rho ,N}}\leq \int_{1/N}^N\big|\partial _t\big[K_{\nu,M,t}^k(x,y)f(y)-K_{\nu,M,t}^k(x,y_0)f(y_0)\big]\big|dt.
$$
The function
$$
(t,y)\in \Big[\frac{1}{N},N\Big]\times B(y_0,1)\rightarrow \partial _t\big[K_t(x,y)f(y)\big],
$$
is uniformly continuous in $[\frac{1}{N},N]\times B(y_0,1)$. Then,
$$
\|F_x(y)-F_x(y_0)\|_{E_\rho }\longrightarrow 0,\mbox{ as }y\rightarrow y_0,
$$
that is, $G$ is continuous and, consequently, $G$ is strongly measurable.

By \eqref{3.4} it follows that
$$
\int_{\mathbb R^n}\|G(y)\|_{E_{\rho ,N}}dy<\infty,
$$
provided that $x\not\in \supp f$.

Let $x\not\in \supp f$. Define $g(t)=A^k_{\nu,M,t}(f\varphi (x,\cdot ))(x)$, $t>0$. We have that
$$
g(t)=\int_{\mathbb R^n}K_{\nu,M,t}^k (x,y)f(y)dy,\quad t\in (0,\infty).
$$
Note that from \eqref{3.4}, $g\in E_{\rho ,N}$ and $\|g\|_{E_{\rho ,N}}\leq C\int_{\mathbb R^n}\frac{|f(y)|}{|x-y|^n}dy$.

On the other hand, let us consider the function
$$
h=\int_{\mathbb R^n}K_{\nu ,M,\cdot}^k(x,y)f(y)dy,
$$
where the integral is understood in the $\mathbb B_N$-Bochner sense.

Let $a\in (\frac{1}{N}
,N)\setminus \{1\}$ and consider $L_a(\alpha)=\alpha (a)-\alpha (1)$, $\alpha \in E_{\rho ,N}$. It is clear that $L_a\in E_{\rho ,N}'$, the dual space of $E_{\rho ,N}$. According to the properties of the Bochner integral we have that
$$
L_a(h)=\int_{\mathbb R^n}K_{\nu,M,a}^k (x,y)f(y)dy-\int_{\mathbb R^n}K_{\nu,M,1}^k (x,y)f(y)dy=g(a)-g(1).
$$
It follows that $h(a)-g(a)=h(1)-g(1)$ and we can conclude that $h=g$ in $E_{\rho ,N}$.

We now apply the Calder\'on-Zygmund theory for vector-valued singular integrals (\!\! \cite{RubRT}). Since the variation operator $V_\rho (\{A^k_{\nu,M,t}\}_{t>0})$ is bounded on $L^p(\mathbb R^n,\mu_\nu)$, for every $1<p<\infty$, by Proposition \ref{Prop2.2} we deduce that the operator $V_{\rho,{\rm loc}} (\{A^k_{\nu,M,t}\}_{t>0})$ is also bounded on $L^p(\mathbb R^n,\mu_\nu)$, for every $1<p<\infty$. Hence, we have that the operator $T_{\rm loc}$ defined by
$$
T_{\rm loc}(f)(x)=A^k_{\nu,M,\cdot}(f\varphi (x,\cdot ))(x),\quad x\in \mathbb R^n,
$$
defines a bounded operator from $L^p(\mathbb R^n,dx)$ into $L^p_{E_{\rho ,N}}(\mathbb R^n,dx)$, for every $1<p<\infty$. We also have that, for every $f\in C_c^\infty (\mathbb R^n)$,
$$
T_{\rm loc}(f)(x)=\int_{\mathbb R^n}K_{M,\cdot }^\nu (x,y)f(y)dy,\quad x\not\in \supp f.
$$
Then, by \eqref{3.4} and \eqref{CZ2K} we deduce that $T_{\rm loc}$ can be extended to $L^1(\mathbb R^n,dx)$ as a bounded operator from $L^1(\mathbb R^n,dx)$ into $L^{1,\infty }_{E_{\rho ,N}}(\mathbb R^n,dx)$. From Proposition \ref{Prop2.3} it follows that $T_{\rm loc}$ can be extended to $L^1(\mathbb R^n,\mu_\nu)$ as a bounded operator from $L^1(\mathbb R^n,\mu_\nu)$ into $L^{1,\infty }_{E_{\rho ,N}}(\mathbb R^n,\mu_\nu)$. In addition we have that
$$
\sup_{N\in \mathbb N}\|T_{\rm loc}\|_{L^1(\mathbb R^n,\mu _\nu)\longrightarrow L^{1,\infty}_{E_{\rho ,N}}(\mathbb R^n,\mu_\nu)}<\infty.
$$
By taking limit as $N\rightarrow \infty$ we get, for every $f\in C_c^\infty (\mathbb R^n)$,
$$
\mu_\nu (\{x\in \mathbb R^n: V_{\rho , {\rm loc}}(\{A^k_{\nu,M,t}\}_{t>0})(f)(x)>\lambda \})\leq \frac{C}{\lambda}\|f\|_{L^1(\mathbb R^n,\mu_\nu )},\quad \lambda >0.
$$
Let us deal now with $V_{\rho , {\rm glob}}(\{A^k_{\nu,M,t}\}_{t>0})$. Let $f\in C_c^\infty (\mathbb R^n)$. As in \eqref{3.3} we have that 
\begin{align*}
V_{\rho , {\rm glob}}(\{A^k_{\nu,M,t}\}_{t>0})(f)(x)&\leq \int_0^\infty |\partial _t[A^k_{\nu,M,t}((1-\varphi (x,\cdot ))f)(x)]|dt\\
&\leq C\int_{\mathbb{R}^n}|f(y)|e^{2y_1}\int_0^\infty |\partial _t L_{\nu ,M,t}^k(x,y)|dtdy,\quad x\in \mathbb R^n,
\end{align*}
where $L_{\nu,M,t}^k(x,y)$, $x,y\in \mathbb R^n$, $t>0$ is given by \eqref{L}.

As in the study of $g_{\nu,M,{\rm glob}}^k$ we can see that
$$
\int_0^\infty |\partial _tL_{\nu ,M,t}^k(x,y)|dt\leq C\sup_{u>0}|W_u^\nu (x,y)|,\quad x,y\in \mathbb R^n\mbox{ and }|x-y|\geq 1.
$$
Then, again by \cite[Lemma 6]{LSjW} we can write
$$
V_{\rho ,{\rm glob}}(\{A^k_{\nu,M,t}\}_{t>0})(f)(x)\leq C\int_{|x-y|\geq 1}e^{-(x_1+y_1)}\frac{e^{-|x-y|}}{|x-y|^{n/2}}f(y)d\mu_\nu (y),\quad x\in \mathbb R^n,
$$
and according to \cite[Proposition 4]{LSjW} we conclude that $V_{\rho ,{\rm glob}}(\{A^k_{\nu,M,t})\}_{t>0}$ defines a bounded operator from $L^1(\mathbb R^n,\mu_\nu)$ into $L^{1,\infty }(\mathbb R^n,\mu_\nu )$.

Thus, the proof is finished.

\bibliographystyle{acm}

\end{document}